\documentclass{amsart}
\setlength{\textheight}{43pc}
\setlength{\textwidth}{28pc}
\usepackage{amsfonts}
\usepackage{amsmath,amssymb}
\usepackage{amsthm}
\usepackage{amscd}
\usepackage{graphics}
\usepackage{graphicx}
{
\theoremstyle{remark}
\newtheorem{Def}{{\rm Definition}}
\newtheorem{Ex}{{\rm Example}}

}
\newtheorem{Cor}{Corollary}
\newtheorem{Prop}{Proposition}
\newtheorem{Thm}{Theorem}

\begin{document}
\title[Homological information of fold maps given by surgerying]{Constructing fold maps by surgery operations and homological information of their Reeb spaces}
\author{Naoki Kitazawa}
\keywords{Singularities of differentiable maps; singular sets, fold maps. Differential topology}
\subjclass[2010]{Primary~57R45. Secondary~57N15.}
\address{Industry of Mathematics for Industry, Kyushu University, 744 Motooka, Nishi-ku Fukuoka 819-0395, Japan}
\email{n-kitazawa.imi@kyushu-u.ac.jp}
\maketitle
\begin{abstract}
In this paper, as a fundamental study on the theory of {\it Morse} functions and their higher dimensional versions or {\it fold maps} and applications
 to geometric theory of manifolds, which were started in 1950s by differential topologists such as Thom and Whitney and have been studied actively, we study algebraic and differential
 topological properties of certain fold maps and their source manifolds. More precisely, we investigate fold maps obtained by surgery operations to fundamental
   fold maps and especially, homology groups of {\it  Reeb spaces}, which are defined as the space of all connected components of inverse images, often inheriting important invariants of manifolds such as homology groups and fundamental and important tools in studying manifolds.  
   
    Studies of this paper are especially motivated by the stream of studies of fold maps satisfying good (differential) topological properties such as {\it special generic} maps, which
    were defined in 1970s and studied since 1990s by Saeki and Sakuma, and {\it round} fold maps, which were introduced by the author in 2012--2014, and their source manifolds. Moreover, constructions
     of generic maps by fundamental surgeries to investigate manifolds by using generic maps, studied by Kobayashi and Saeki etc., also have motivated the present study.   
         
\end{abstract}


\maketitle
\section{Introduction and fundamental notation and terminologies}
\label{sec:1}
\subsection{Historical backgrounds}
Morse functions and {\it fold maps}, which are regarded as higher dimensional versions of Morse functions, play important roles
 in studying smooth
 manifolds by using generic maps since related studies were started by Thom (\cite{thom}) and Whitney (\cite{whitney}).

  Let $m$ and $n$ be integers satisfying $m \geq n \geq 1$. A {\it fold map} from
 an $m$-dimensional closed smooth manifold into an $n$-dimensional smooth
 manifold without boundary is defined as a smooth map
 such that each singular point is of the form
$$(x_1, \cdots, x_m) \mapsto (x_1,\cdots,x_{n-1},\sum_{k=n}^{m-i}{x_k}^2-\sum_{k=m-i+1}^{m}{x_k}^2)$$
 for an integer $0 \leq i \leq \frac{m-n+1}{2}$ (the integer $i$ is uniquely
 determined and it is called the {\it index} of the singular point). We easily have the following.
\begin{Prop}
\label{prop:1}
\begin{enumerate}
\item The set consisting of all singular points {\rm (}the {\it singular set}{\rm )} of the map is a smooth closed submanifold of dimension $n-1$ of the source manifold. 
\item The restriction map to the singular set is a smooth immersion of codimension $1$.
\end{enumerate}
\end{Prop}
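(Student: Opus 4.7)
The plan is to prove both parts of the proposition by a purely local calculation in the standard coordinates provided by the definition of a fold map. Since being a closed smooth submanifold and being a smooth immersion are both local properties, it suffices to check them in a neighborhood of each singular point, where we can assume the map takes the normal form
$$f(x_1,\ldots,x_m)=\Bigl(x_1,\ldots,x_{n-1},\sum_{k=n}^{m-i}x_k^2-\sum_{k=m-i+1}^{m}x_k^2\Bigr).$$

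First I would compute the Jacobian matrix of $f$ in these coordinates. The top $n-1$ rows form the identity in the first $n-1$ columns and vanish in the remaining columns, so the Jacobian has rank $n$ at a point if and only if the bottom row has a nonzero entry among columns $n,\ldots,m$. That bottom row is $(0,\ldots,0,2x_n,\ldots,2x_{m-i},-2x_{m-i+1},\ldots,-2x_m)$, so the rank drops precisely when $x_n=x_{n+1}=\cdots=x_m=0$. Thus the singular set in the chart is the linear subspace cut out by the regular system of equations $x_n=\cdots=x_m=0$, which is a smooth submanifold of dimension $n-1$. Gluing these local pieces together, the global singular set is a smooth submanifold of dimension $n-1$; it is closed because it is locally closed and closedness is a topological issue that follows from the fact that the rank of the differential is a lower semicontinuous function on a compact source.

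For part (2), I would then just read off the restriction of $f$ to this local slice. On $\{x_n=\cdots=x_m=0\}$ one has
$$f(x_1,\ldots,x_{n-1},0,\ldots,0)=(x_1,\ldots,x_{n-1},0),$$
whose differential with respect to the parameters $(x_1,\ldots,x_{n-1})$ is the inclusion $\mathbb{R}^{n-1}\hookrightarrow\mathbb{R}^n$ onto the first $n-1$ coordinates. This has rank $n-1$, so the restriction is an immersion, and its codimension in the $n$-dimensional target is $1$.

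There is no real obstacle here: the entire content of the proposition is encoded in the choice of normal form, and the argument reduces to linear algebra for the Jacobian of an explicit polynomial map. The only small point to be careful about is that the local charts in which $f$ has the given normal form cover all of the singular set by hypothesis, so the local submanifold structure automatically assembles into a global one, and no patching lemma is needed beyond the standard implicit function theorem reasoning already implicit in the normal form.
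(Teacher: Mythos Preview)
Your argument is correct and is exactly the standard verification one would give. The paper itself does not supply a proof of this proposition: it merely remarks ``We easily have the following'' and states the two assertions. So there is no alternative argument in the paper to compare against; you have simply written out the routine Jacobian computation that the author left to the reader.

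One minor wording point: your justification of closedness is slightly roundabout. Since the rank of the differential is lower semicontinuous, the set $\{p : \operatorname{rank} d_pf \ge n\}$ is open, hence its complement $S(f)$ is closed in $M$; compactness of $M$ is not actually needed for this. Otherwise the computation of the singular locus $\{x_n=\cdots=x_m=0\}$ and of the restriction $f(x_1,\ldots,x_{n-1},0,\ldots,0)=(x_1,\ldots,x_{n-1},0)$ is exactly what the normal form is designed to make transparent, and your proof is complete.
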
 
We also note that if the restriction map to the singular set is an immersion with normal crossings, then it is {\it stable} and stable fold maps satisfy this property : {\it stable} maps are generic maps and important in the theory of global singularity; see \cite{golubitskyguillemin} for example. To know fundamental properties of fold maps, see also \cite{golubitskyguillemin} and \cite{saeki} for example.

 As a branch of the theory of fold maps, stable maps and generic maps and its applications to studies of smooth
 manifolds, algebraic and differential topological properties of fold maps of several
 classes such as {\it special generic} maps, {\it simple} fold maps and {\it round} fold maps and manifolds admitting such
 maps have been studied
 since 1990s.

 A {\it special generic} map
 is defined as a fold map such that the indices of singular points are always $0$. Special generic maps were first introduced and studied in \cite{furuyaporto} and differential topological theory of special generic maps has developed owing to studies of Saeki and Sakuma especially in1990s. A Morse function on a homotopy
 sphere with just
 two singular points is regarded as a simplest special generic map; every smooth
 homotopy sphere of dimension $k \neq 4$ and the $4$-dimensional standard sphere $S^4$ admits such
 a function and conversely, a
 manifold admitting such a function is a homotopy sphere (see \cite{milnor} and \cite{milnor2} and see also \cite{reeb}). Moreover, we easily
 obtain a special generic map from any standard sphere of dimension $k_1 \geq 2$ into the $k_2$-dimensional Euclidean space ${\mathbb{R}}^{k_2}$ by the canonical
 projection of the unit sphere under the assumption that $k_1 \geq k_2 \geq 1$ holds (FIGURE \ref{fig:1}). 

\begin{figure}
\includegraphics[width=40mm]{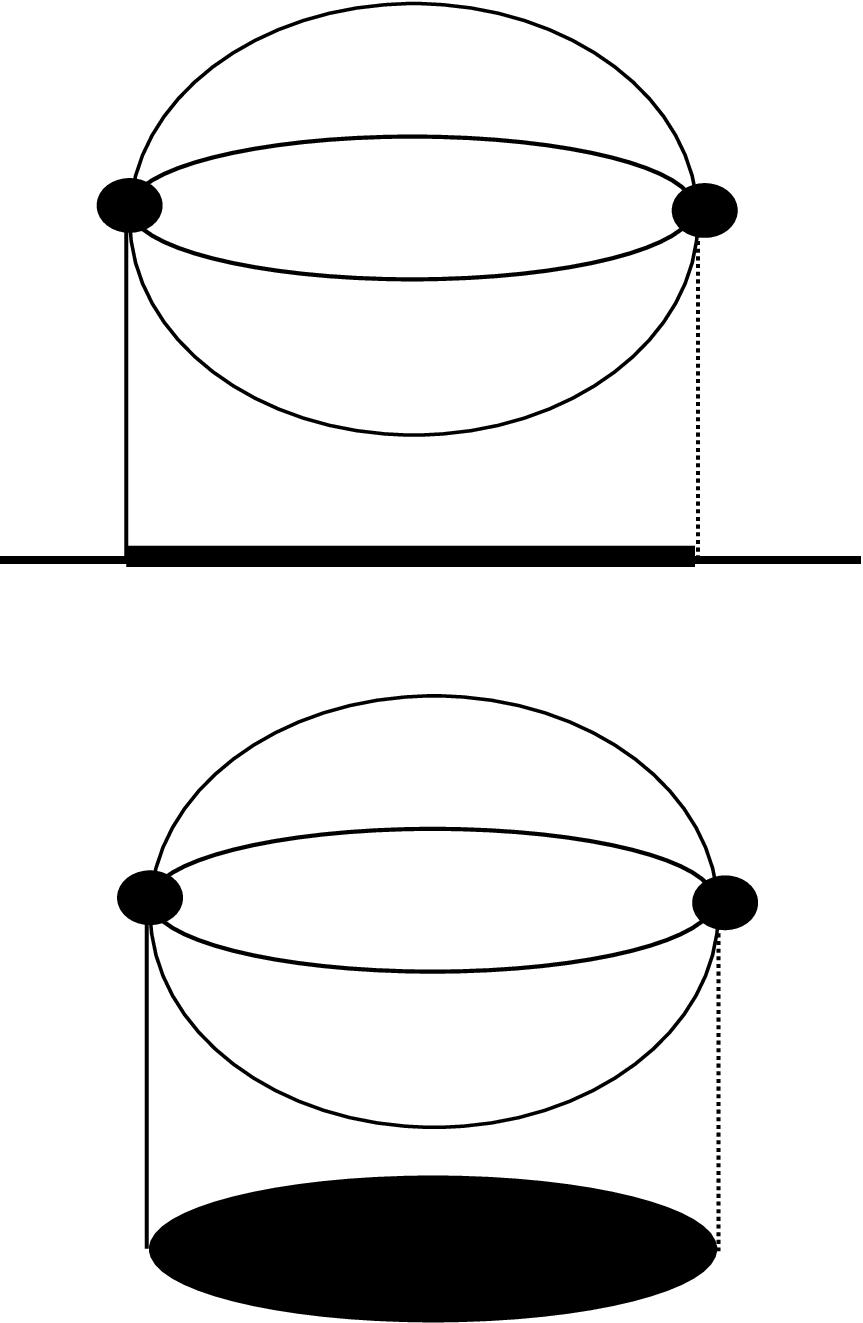}
\caption{The canonical projections of the unit spheres for the cases $k_2=1$ and $k_2 \geq 2$.}
\label{fig:1}
\end{figure}

On the other hand, it was
 shown that a homotopy sphere of dimension $k_1$ admitting a special generic map
 into ${\mathbb{R}}^{k_2}$ is diffeomorphic to the standard sphere $S^{k_1}$ under the assumption that $1 \leq k_1-k_2 \leq 3$ holds (see \cite{saeki2} and \cite{saeki3}). In addition, in \cite{saeki2}, \cite{saeki3}
 and \cite{saekisakuma}, the diffeomorphism types of manifolds admitting special generic maps into ${\mathbb{R}}^2$ and ${\mathbb{R}}^3$ are completely
 or partially classified. A Morse function and its singular points tells us homology groups and some information
 on homotopy of the source manifold and a special generic map and its singular points often tells
 us more; the homeomorphism and diffeomorphism type of the source manifold.

  Such interesting properties of special generic maps imply that considering appropriate classes
   of (stable)
 fold maps and studying algebraic and differential topological properties of such maps and manifolds admitting them
 is essential in the theory of fold maps. As generalizations, {\it simple} fold maps and fold maps such that connected components of inverse images containing no singular points are spheres have been also studied in \cite{saeki} and \cite{sakuma} for example. Motivated by this stream, {\it round} fold maps were
 introduced in 2012--2014 by the author in \cite{kitazawa2} and \cite{kitazawa3}.

\begin{Def}[\cite{kitazawa2} and \cite{kitazawa3}]
\label{def:1}
A {\it round} fold map is a fold map into an Euclidean space of dimension larger than $1$ satisfying the following three.
\begin{enumerate}
\item The singular set is a disjoint union of standard spheres.
\item The restriction map to the singular set is an embedding.
\item The set consisting of all the singular values (the {\it singular value set}) of the map is a disjoint union
 of spheres embedded concentrically. 
\end{enumerate}
\end{Def} 

See also FIGURE \ref{fig:2}, representing the singular value set of a round fold map into the plane.

\begin{figure}
\includegraphics[width=40mm]{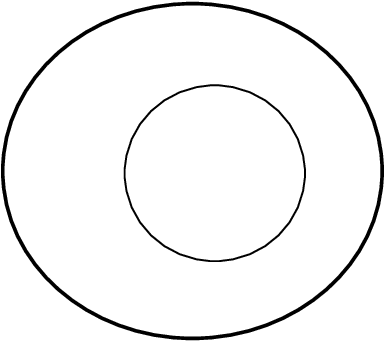}
\caption{The singular value set of a round fold map into the plane.}
\label{fig:2}
\end{figure}

 Some special generic maps are round fold maps. The standard sphere of dimension $m>1$
 admits such a map into ${\mathbb{R}}^n$ under the assumption that $m \geq n \geq 2$ holds; we can construct the map by the natural
 projection before and the resulting singular set is the ($n-1$)-dimensional standard sphere as shown also in FIGURE \ref{fig:1}. Furthermore, in \cite{saeki}, Saeki constructed such a map into the plane on every homotopy sphere whose
 dimension is larger than $1$ and not $4$.

 Homology and homotopy groups and more precisely, the
 homeomorphism and diffeomorphism types of manifolds admitting round fold maps were studied in \cite{kitazawa}, \cite{kitazawa2}, \cite{kitazawa3} and \cite{kitazawa4} under appropriate
 conditions by the author. For example, homology and homotopy groups of manifolds admitting round fold maps such that the inverse images of regular values are
 disjoint unions of spheres were studied in \cite{kitazawa2} and \cite{kitazawa3}. In addition, the following has been shown through explicit constructions of
 fold maps in \cite{kitazawa}, \cite{kitazawa2} or \cite{kitazawa4}.

\begin{Prop}[ \cite{kitazawa}, \cite{kitazawa2} or \cite{kitazawa4}]
\label{prop:2}

Let $m$ and $n$ be integers satisfying $m>n \geq 2$. 
\begin{enumerate}
\item
The total
 space of a smooth bundle over $S^n$ whose fiber is diffeomorphic
 to a homootpy sphere $\Sigma$ obtained by gluing two copies of a standard closed disc on the boundaries by a diffeomorphism admits a round fold map into ${\mathbb{R}}^n$ satisfying
 the following conditions.

\begin{enumerate}
\item The singular set consists of $2$ connected components.
\item The inverse image of a regular value in
 the connected component of the regular value set, which
 is an open disc, is a disjoint union of two copies of the homotopy sphere $\Sigma$
  and these homotopy spheres are both fibers of the bundle.
\end{enumerate}
\item A smooth manifold
 represented as a connected sum of $l$ smooth manifolds regarded as
 the total spaces of smooth
 bundles over $S^n$ whose fibers are diffeomorphic
 to $S^{m-n}$ admits a round fold map into ${\mathbb{R}}^n$ satisfying
 the following conditions.
\begin{enumerate}
\item The inverse image of each regular value is a disjoint union of standard spheres
 and regarded as a fiber of a smooth bundle above.
\item The number of connected components of the inverse image
 of a regular value in the connected component of the regular value set, which
 is an open disc, is $l$.
\end{enumerate}
\end{enumerate}
\end{Prop}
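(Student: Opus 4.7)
My plan is to build the required round fold maps explicitly. For part (1), I take as a basic building block the standard projection $\pi_n \colon S^n \subset \mathbb{R}^{n+1} \to \mathbb{R}^n$, $(x_1, \dots, x_{n+1}) \mapsto (x_1, \dots, x_n)$, which is a special generic map whose singular set is the equator $\{x_{n+1} = 0\}$ and whose singular value set is the unit sphere of $\mathbb{R}^n$, and which is two-to-one over the open unit disc. In the trivial-bundle case $E = F \times S^n$, I fix a Morse function $\mu \colon F \to [0,1]$ with exactly two critical points at values $0$ and $1$, which exists because $F$ is an almost-sphere, and I set
\[
\tilde{f}(p, s) = (1 + \mu(p))\, \pi_n(s).
\]
A direct differential computation then shows that $\tilde{f}$ is a fold map whose singular set is the disjoint union of two copies of $S^{n-1}$, one for each critical point of $\mu$, mapped diffeomorphically onto the two concentric singular value spheres of radii $1$ and $2$. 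Near the centre, the preimage of a regular value is the disjoint union of the two fibers of $F \times S^n \to S^n$ over the north and south poles of $S^n$; by inserting a radial cutoff into the multiplier $1 + \mu(p)$ so that it becomes the constant $1$ over a neighbourhood of the poles, I can arrange that the preimage of every regular value in the open inner disc consists of two fibers of the product bundle. For a general smooth bundle $\pi \colon E \to S^n$, I would trivialize $E$ over the two closed hemispheres of $S^n$, perform the same construction on each trivialized piece with the same intrinsic $\mu$, and glue along the equator via the clutching diffeomorphism of the bundle; the two halves patch together into a fold map with the same singular and preimage structure, where the two central fibers are now the corresponding fibers of $E$.

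For part (2), given $M = M_1 \# \cdots \# M_l$ with each $M_i$ the total space of a smooth $S^{m-n}$-bundle over $S^n$, I would produce a round fold map on $M$ by a nesting and gluing procedure. Starting from round fold maps on each $M_i$ constructed as in part (1), I would radially reparameterise in the target so that the singular value spheres of the $l$ maps lie in mutually disjoint concentric annular shells of $\mathbb{R}^n$, then realise the source connected sums by attaching $1$-handles along pairs of regular spherical fibers in adjacent annular preimages, and finally perform a further radial reparameterisation so that the resulting innermost regions coalesce into a single common open disc. The outcome is a round fold map on $M$ whose singular value set is a union of concentric spheres, whose regular preimages are disjoint unions of standard spheres, and whose preimage over the innermost open disc has exactly $l$ connected components, one inherited from each summand.

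The step I expect to be the main obstacle is the combination of $1$-handle attachments and radial reparameterisations in part (2): one has to guarantee that each of these operations stays within the fold-map category, preserves the concentric structure of the singular value set, and leaves the inverse images of regular values as disjoint unions of standard spheres. I would handle this by writing down explicit local coordinate models near each $1$-handle and each reparameterisation region and matching them directly to the fold normal form provided at the beginning of the paper, following the analogous round-fold-map constructions carried out in the author's earlier works \cite{kitazawa}, \cite{kitazawa2}, \cite{kitazawa4}.
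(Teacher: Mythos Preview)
The paper does not prove this proposition; it records it as already established by explicit constructions in \cite{kitazawa}, \cite{kitazawa2} and \cite{kitazawa4}, and Example~\ref{ex:2} later indicates that the maps in part~(2) are obtained there by finite iterations of strongly trivial normal S-bubbling operations whose generating manifolds are points. There is thus no in-paper argument to compare against, only the shape of those cited constructions.

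Your trivial-bundle formula $\tilde f(p,s)=(1+\mu(p))\,\pi_n(s)$ in part~(1) is correct, and the radial cutoff does make the central preimages literal fibres. The genuine gap is the passage to a non-trivial bundle. With the same $\mu$ in each hemispherical chart, the $+$-chart map along the equator is $(p,\omega)\mapsto(1+\mu(p))\omega$, while the $-$-chart map, read through the clutching diffeomorphisms $\phi_\omega$, becomes $(p,\omega)\mapsto(1+\mu(\phi_\omega(p)))\omega$; these agree only if every $\phi_\omega$ preserves $\mu$, which fails for a general clutching map, so the two halves do \emph{not} patch to a well-defined smooth map on $E$. What is needed is a fibrewise Morse function on $E$ with exactly two critical points on each fibre, or an arrangement confining all $\mu$-dependence to a single trivialising chart; supplying this is the substantive step your sketch omits. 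For part~(2), the nesting-and-handle outline is plausible but the bookkeeping is not pinned down: nesting the $l$ maps gives $2l$ fibre components over the common innermost disc, and neither your $1$-handles nor the unspecified ``coalescing'' step is shown to bring this to exactly $l$ while keeping the map fold with standard-sphere fibres.
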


\begin{figure}
\includegraphics[width=80mm]{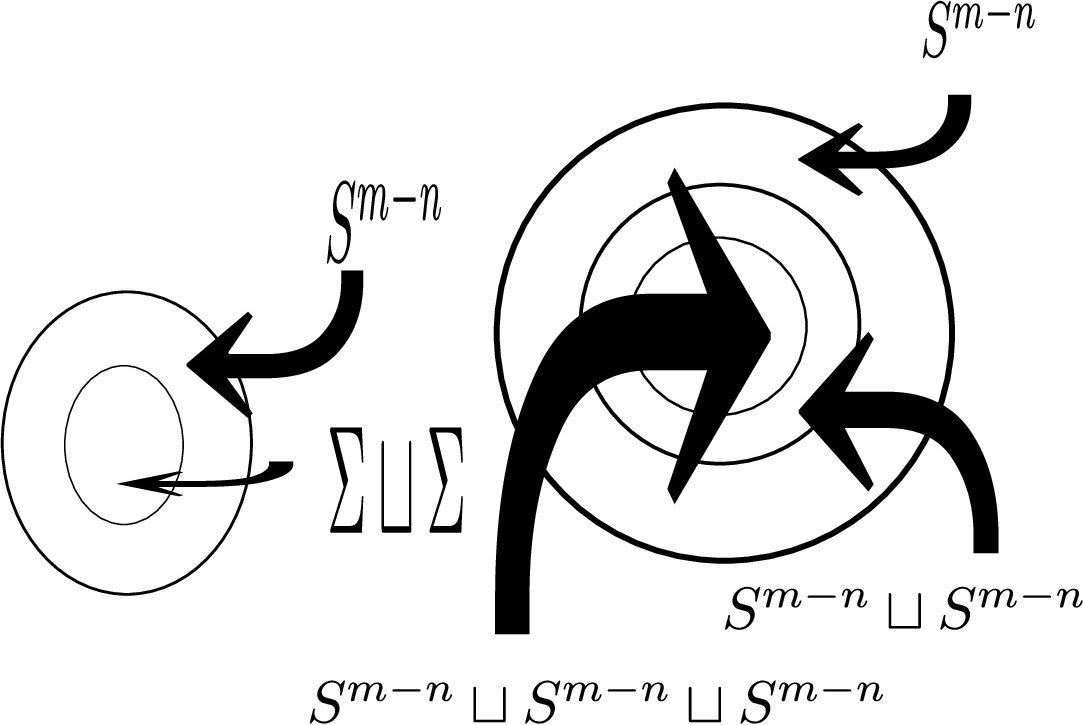}
\caption{Round fold maps into ${\mathbb{R}}^n$ ($n \geq 2$) of Proposition \ref{prop:2} (each manifold represents the corresponding inverse image of each regular value).}
\label{fig:3}
\end{figure}

  Note that explicit constructions are fundamental problems in the theory of fold maps, that succeeding in constructions help us to know precise structures of manifolds such as topological properties
 of submanifolds obtained as inverse images of regular values and that they will help us to study manifolds by the theory of fold
 maps easy to handle. However, constructions are often difficult except cases such as special generic maps and some round fold maps mentioned here.
  
\subsection{Main works of this paper and fundamental notation and terminologies}

 In this paper, we study algebraic
 and differential topological properties of fold
 maps which are not always round
 or special generic and manifolds admitting such maps. For example, we mainly study structures of fold maps obtained
  inductively by iterations of fundamental operations to algebraic and differential topologically simpler fold maps. In these studies, we introduce operations consisting of
 surgeries or operations of making singular value sets larger and exchanging inverse images of the maps locally and construct several new explicit more complex fold maps. Such operations were also introduced
  by Kobayashi and Saeki in \cite{kobayashi} and \cite{kobayashisaeki} for example and based on the
   ideas of the operations, we introduce these new operations. 

  Before introducing the operations, we review the {\it Reeb space} of a smooth
 map, which is defined as the space consisting of all connected components of
 inverse images of the map and is a fundamental tool in studying the manifold. In fact, the Reeb
 spaces often inherit fundamental invariants of source manifolds such as homology groups.  
 
  Next, we introduce a {\it {\rm (}normal{\rm )} bubbling operation} and as specific versions, an {\it M-bubbling} operation and an {\it S-bubbling operation} to a (stable) fold map (Definition \ref{def:3} and Definition \ref{def:5}). Through \cite{kobayashi} and \cite{kobayashi2}, Kobayashi introduced
 a {\it bubbling surgery} as a surgery operation to obtain explicit stable fold maps and he
 succeeded in constructing explicit (stable) fold maps and bubbling operations
 are regarded as extensions of bubbling surgeries. We apply bubbling operations
 to fold maps and we obtain new maps. We can easily perform such operations to stable fold maps (Example \ref{ex:1} (\ref{ex:1.1})) and as specific cases, for example, by the definition, if we perform an S-bubbling operation to a
  fold map such that (the restriction map to the singular set is an embedding and that) the inverse images of regular values are always disjoint unions
   of spheres as presented in Proposition \ref{prop:2}, then we can obtain a fold map satisfying a similar condition (Example \ref{ex:1} (\ref{ex:1.4})). We mainly study Reeb spaces
 of maps obtained by finite iterations of (normal) bubbling operations starting from a simpler fold map. As main theorems, we see that Euler numbers and homology groups of the resulting Reeb spaces or equivalently, the difference of these numbers and groups must satisfy
  some conditions and that they are often also flexible; for infinitely many integers, we obtain maps and Reeb spaces so that the Euler numbers are the given numbers and we see that there
 are many types of homology groups of the resulting Reeb spaces: for example, on the other hands, if we only consider round fold maps such that the inverse images of regular values are
 disjoint unions of spheres, then homology groups of Reeb spaces are not
  so flexible (this fact is also pointed out in Example \ref{ex:2} (\ref{ex:2.1}) for example.) 
 
 We also introduce isomorphisms between (co)homology and homotopy
 groups of manifolds admitting stable fold maps such that the restriction map to the singular set is an embedding or more generally, {\it simple fold maps}, which will appear in Example \ref{ex:1.4} with the definition, and that the inverse images of regular
 values are disjoint unions of spheres and those groups of their Reeb spaces, which
 were discussed in \cite{saekisuzuoka} and \cite{suzuoka} and later in \cite{kitazawa2} and \cite{kitazawa3} (Proposition \ref{prop:4}). As mentioned before, by an iteration of S-bubbling operations starting from a fold
  map satisfying the condition posed here, we always obtain a fold map satisfying this condition. 
 
 By combining obtained results on
 Euler numbers and (co)homology groups of Reeb spaces and these relations, for example, we can observe that (the changes of) Euler numbers and homology groups of manifolds admitting fold maps obtained by finite iterations of normal S-bubbling operations starting from a simpler fold map satisfying the condition here must satisfy some conditions and that (the changes of) the Euler numbers and the homology groups are often also flexible.

 In this paper, for
 a smooth map $c$, we define the {\it singular set} of $c$ as the set consisting of all singular points of $c$ as in the presentation of the fundamental
 properties of fold maps before or Proposition \ref{prop:1} and denote it by $S(c)$. For the smooth map $c$, we call $c(S(c))$ the {\it singular value set} of $c$ as in the presentation of
 the definition of a round fold map or Definition \ref{def:1} before. We call ${\mathbb{R}}^n-c(S(c))$ the {\it regular value set} of $c$.

  We also note on (homotopy) spheres. In this paper, an {\it almost-sphere} of dimension $k$ means a homotopy sphere
 given by gluing two $k$-dimensional standard
 closed discs together by a diffeomorphism between the boundaries.

  We often use terminologies on (fiber) bundles in this paper (see also \cite{milnorstasheff} and \cite{steenrod}). For a topological space $X$, an {\it $X$-bundle} is a bundle
 whose fiber is $X$. A bundle whose structure group is $G$
 is said to be a {\it trivial} bundle if it is equivalent to the product bundle
 as a bundle whose structure group is $G$. A {\it linear} bundle is
 a smooth bundle whose fiber is a standard disc or a standard sphere and whose structure group consists of
 linear transformations on the fiber.

  Throughout this paper, we assume that $M$ is a smooth, closed and connected manifold of dimension $m$, that
 $N$ is a smooth manifold of dimension $n$ without boundary, that $f:M \rightarrow N$ is a smooth map and that $m \geq n \geq 1$. Manifolds are of class $C^{\infty}$ and maps
 between manifolds are also of class $C^{\infty}$ and fold maps unless otherwise stated
 in the proceeding sections. In addition, the structure groups of bundles such that the fibers are (smooth) manifolds are assumed to be
 (subgroups of) diffeomorphism groups.


\section{Reeb spaces and (normal) S-bubbling operations}
\label{sec:2}
\subsection{Definitions and fundamental properties of Reeb spaces and normal S-bubbling operations.}
First, we review Reeb spaces of maps (see also \cite{reeb}).
\begin{Def}
\label{def:2}
 Let $X$ and $Y$ be topological spaces. For $p_1, p_2 \in X$ and for a map $c:X \rightarrow Y$, 
 we define as $p_1 {\sim}_c p_2$ if and only if $p_1$ and $p_2$ are in
 a same connected component of $c^{-1}(p)$ for some $p \in Y$. ${\sim}_{c}$ is an equivalence relation.

\ \ \ We denote the quotient space $X/{\sim}_c$ by $W_c$ and call $W_c$ the {\it Reeb space} of $c$.
\end{Def}

 We denote the induced quotient map from $X$ into $W_c$ by $q_c$. We can define $\bar{c}:W_c \rightarrow Y$ uniquely
 so that the relation $c=\bar{c} \circ q_c$ holds.

  For a (stable) fold map $c$, the Reeb space $W_c$ is regarded as a polyhedron. For example, for a
 Morse function, the Reeb space
 is a graph and for a
 special generic map,
 the Reeb space is regarded as a smooth manifold immersed into the target manifold (see section 2 of \cite{saeki2}).

\begin{Def}
\label{def:3}
For a fold map $f:M \rightarrow N$, let $P$ be a connected component of the regular
 value set ${\mathbb{R}}^n-f(S(f))$. Let $S$ be a connected and orientable closed submanifold of
 $P$ and $N(S)$, ${N(S)}_i$ and ${N(S)}_o$ be small closed tubular neighborhoods
 of $S$ in $P$ such that ${N(S)}_i \subset N(S) \subset {N(S)}_o$
 holds. Furthermore, we can naturally regard ${N(S)}_o$ as a linear bundle whose fiber is
 an ($m-n+1$)-dimensional disc of radius $1$ and ${N(S)}_i$ and $N(S)$ are subbundles of the bundle ${N(S)}$ whose
 fibers are ($m-n+1$)-dimensional discs of radii $\frac{1}{3}$ and $\frac{2}{3}$, respectively. Let
 $f^{-1}({N(S)}_o)$ have a connected component $Q$ such that $f {\mid}_{Q}$
 makes $Q$ a bundle over ${N(S)}_o$.

Let us assume that there exist an $m$-dimensional closed manifold $M^{\prime}$ and
 a fold map $f^{\prime}:M^{\prime} \rightarrow {\mathbb{R}}^n$
 satisfying the following.
\begin{enumerate}
\item $M-{\rm Int} Q$ is a compact submanifold (with non-empty boundary) of $M^{\prime}$ of dimension $m$.
\item $f {\mid}_{M-{\rm Int} Q}={f}^{\prime} {\mid}_{M-{\rm Int} Q}$ holds.
\item ${f}^{\prime}(S({f}^{\prime}))$ is the disjoint union of $f(S(f))$ and $\partial N(S)$.
\item $(M^{\prime}-(M-Q)) \bigcap f^{-1}({N(S)}_i)$ is empty or ${{f}^{\prime}} {\mid}_{(M^{\prime}-(M-Q)) \bigcap f^{-1}({N(S)}_i)}$ makes $(M^{\prime}-(M-Q)) \bigcap f^{-1}({N(S)}_i)$ a bundle over $N(S)$.
\end{enumerate}
These assumptions enable us to consider the procedure of constructing $f^{\prime}$ from $f$ and we
 call it a {\it normal bubbling operation} to $f$ and, ${\bar{f}}^{-1}(S) \bigcap q_f(Q)$, which is homeomorphic
 to $S$, the {\it generating manifold} of the normal bubbling operation. 

Furthermore, let us suppose additional conditions.

\begin{enumerate}
\item
 ${{f}^{\prime}} {\mid}_{(M^{\prime}-(M-Q)) \bigcap f^{-1}({N(S)}_i)}$ makes $(M^{\prime}-(M-Q)) \bigcap f^{-1}({N(S)}_i)$ the disjoint union of two bundles over $N(S)$, then the procedure is called a {\it normal M-bubbling operation} to $f$.
\item ${{f}^{\prime}} {\mid}_{(M^{\prime}-(M-Q)) \bigcap f^{-1}({N(S)}_i)}$ makes $(M^{\prime}-(M-Q)) \bigcap f^{-1}({N(S)}_i)$ the disjoint union of two bundles over $N(S)$ and the fiber of one of the bundles is an almost-sphere, then the procedure is called a {\it normal S-bubbling operation} to $f$.
\end{enumerate}
\end{Def}

FIGURE \ref{fig:4} shows explicit examples of bubbling operations and FIGURE \ref{fig:5} shows
 explicit local changes of Reeb spaces by M-bubbling operations.
\begin{figure}
\includegraphics[width=80mm]{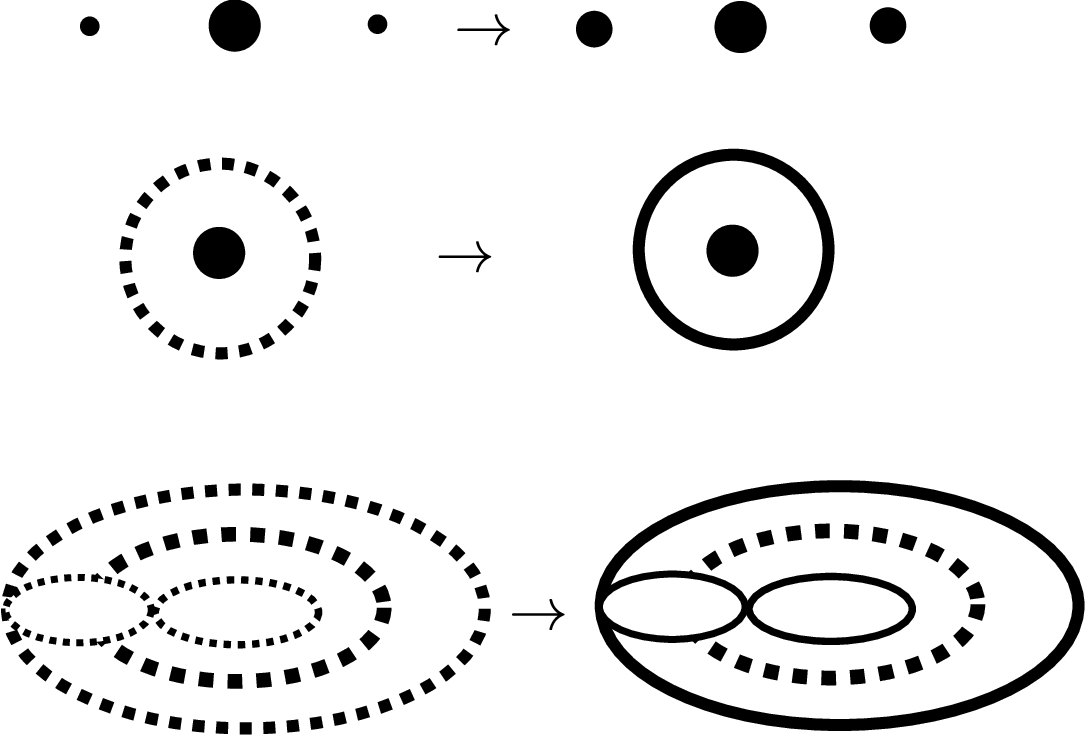}
\caption{Local changes of singular value sets by M-bubbling operations: $(n,k)=(1,0)$, $(n,k)=(2,0)$ and $(n,k)=(2,1)$ respectively.}
\label{fig:4}
\end{figure}

\begin{figure}
\includegraphics[width=80mm]{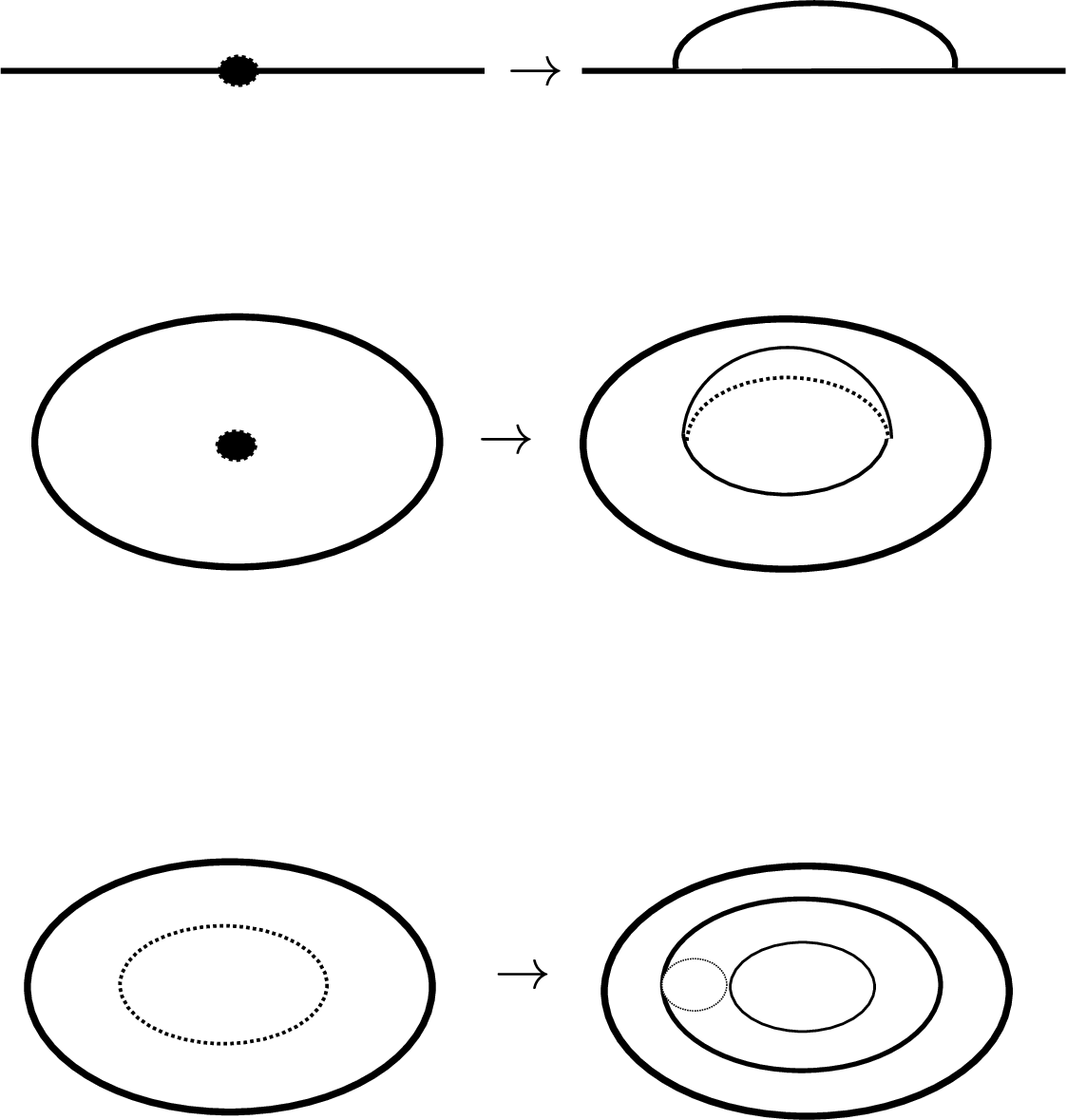}
\caption{Local changes of Reeb spaces by M-bubbling operations: $(n,k)=(1,0)$, $(n,k)=(2,0)$ and $(n,k)=(2,1)$ respectively.}
\label{fig:5}
\end{figure}
In the following example we present explicit and important facts on normal S-bubbling operations.

\begin{Ex}
\label{ex:1}
\begin{enumerate}
\item
\label{ex:1.1}
Let $f:M \rightarrow N$ be a fold map. Let $P$ be a connected component
 of the set $f(M)-f(S(f))$. Let $S$ be a connected and orientable closed submanifold of
 $P$ such that there exists a connected component $S^{\prime}$ of $f^{-1}(S)$ and that $f {\mid}_{S^{\prime}}:S^{\prime} \rightarrow S$
 makes $S^{\prime}$ a trivial bundle over $S$; for example, let $S$ be in the interior of an open ball in the interior
 of $P$. Let $f_{m,n,S}$ be a Morse function such that the following hold.

\begin{enumerate}
\item $f_{m,n,S}$ is a Morse function on a compact manifold one of connected components of whose boundary is the fiber
 $F$ of the bundle $S^{\prime}$ over $S$ and with just one singular point.
\item The inverse image of
 the maximal value is the component, diffeomorphic to $S$, and the inverse image of the minimal value is the disjoint
 union of connected components of the boundary except the previous one if the disjoint union is not empty and is the singular point of $f_{m,n,S}$ if the disjoint union is empty (or the index of the singular point is $0$).  
\end{enumerate} 

 Then, by a normal bubbling operation to $f$ such that the generating manifold is $S^{\prime}$, we
 can obtain a new fold map $f^{\prime}:M^{\prime} \rightarrow {\mathbb{R}}^n$ satisfying the following conditions
 where we abuse notation in Definition \ref{def:3}. We call this operation
 a {\it trivial} normal bubbling operation.   
\begin{enumerate}
\item $f {\mid}_{M-{\rm Int} Q}={f}^{\prime} {\mid}_{M-{\rm Int} Q}$.
\item ${{f}^{\prime}} {\mid}_{{{f}^{\prime}}^{-1}({N(S)}_i) \bigcap (M^{\prime}-(M-Q))}$ gives a disjoint
 union of two trivial bundles over ${N(S)}_i$ whose
 fibers are both almost-spheres. 
\item There exists a connected component of ${f^{\prime}}^{-1}(N(S)_o-{\rm Int}N(S)_i)$ such that the restriction map
 of ${f^{\prime}}$ to the component is regarded as the product of the Morse
 function $f_{m,n,S}$ and ${\rm id}_{\partial N(S)}$.
\end{enumerate}
Moreover, let the normal bundle or tubular neighborhood of $S$ be trivial. $N_o(S)$ is
 represented by $S \times D^{n-\dim S}$ and $S$ is regarded
  as $S \times \{0\} \subset S \times D^{n-\dim S}$. For example, let $S$ be the standard sphere embedded as an unknot in the interior of an open ball in the interior
 of $P$. Furthermore, let the
   restriction of $f^{\prime}$ to ${f^{\prime}}^{-1}(N(S)_o)$ is regarded as the product
    of a surjective map $f^{\prime} {\mid}_{{f^{\prime}}^{-1}(D^{n-\dim S})}:{f^{\prime}}^{-1}(D^{n-\dim S}) \rightarrow D^{n-\dim S}$ where $D^{n-\dim S}$ is a fiber of the trivial bundle $N(S)_o$ and ${\rm id}_{S}$. Then we call the previous operation a {\it strongly trivial} normal bubbling operation.  
\item
\label{ex:1.2}
In the previous example, if the fiber $F$ is orientable and $F_1$ and $F_2$ is a smooth, closed, connected and orientable
 so that by orienting $F_1$, $F_2$ and $F$ appropriately, $F$ is represented as a connected sum of $F_1$ and $F_2$, then, we can consider $f_{m,n.S}$ so that the boundary of
 its source manifold consists of three connected components and the boundary with the connected
 component $F$ removed is the disjoint union of $F_1$ and $F_2$.  In this case, the normal bubbling operation is an M-bubbling operation.
\item
\label{ex:1.3}
In (\ref{ex:1.1}), for any almost-sphere $\Sigma$ of dimension $m-n$, we can consider $f_{m,n.S}$ so that the boundary of
 its source manifold consists of three connected components and the boundary with the connected
 component $F$ removed is the disjoint union of $\Sigma$ and a manifold $F^{\prime}$ homeomorphic to $F$. 
 In this case, the operation is an S-bubbling operation.
\item
\label{ex:1.4}
If we perform a normal S-bubbling operation to a fold map such that the inverse images of regular values are always disjoint unions of almost-spheres (standard spheres), then we obtain a fold map satisfying this condition again.

A {\it simple} fold map is a fold map such that the map ${q_f} {\mid}_{S(f)}:S(f) \subset M \rightarrow W_f$ is
 injective (see \cite{saeki} and \cite{sakuma} for example). Any special generic map and fold map such that the map $f {\mid}_{S(f)}$ is an embedding are simple fold maps.
 If we perform a normal S-bubbling operation to a simple fold map (such that the map $f {\mid}_{S(f)}$ is an embedding), then so is the resulting fold map. 
\end{enumerate}
\end{Ex}

The following proposition is a fundamental and key tool in the present paper.

\begin{Prop}
\label{prop:3}
For a fold map $f:M \rightarrow N$, let $f^{\prime}:{M}^{\prime} \rightarrow N$ be a fold map obtained by a
 normal M-bubbling operation to $f$ and let $S$ be the generating manifold of the normal
 M-bubbling operation and of dimension $k<n$. Then for any PID $R$ and integer i, we have

$H_i(W_{{f}^{\prime}};R) \cong H_{i}(W_f;R) \oplus (H_{i-(n-k)}(S;R))$.
\end{Prop}

We prove Proposition \ref{prop:3} in the proof of Proposition \ref{prop:6} later; Proposition \ref{prop:3} follows immediately from Proposition \ref{prop:6}.

We have the following as a corollary.

\begin{Cor}
\label{cor:1}
In the situation of Proposition \ref{prop:3}, if $H_{i-n+k}(S;R)$ is free and $H_i(W_f;R)$ is free, then $H_i(W_{{f}^{\prime}};R)$ is also free.
\end{Cor}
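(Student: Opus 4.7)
The plan is to derive the corollary as an immediate consequence of Proposition \ref{prop:3}. By that proposition, for any PID $R$ we have the isomorphism
\[
H_i(W_{f^{\prime}};R) \cong H_i(W_f;R) \oplus H_{i-(n-k)}(S;R).
\]
So the first step is simply to invoke this isomorphism for the given index $i$.

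The second step is to observe that over a PID, the class of free modules is closed under finite direct sums. Under the hypothesis of the corollary, the left-hand summand $H_i(W_f;R)$ is free by assumption, and the right-hand summand $H_{i-(n-k)}(S;R)$ is free because every $H_j(S;R)$ is assumed free (this handles, in particular, the degree $j = i-(n-k)$, including the degenerate cases where $i-(n-k)$ is negative or exceeds $\dim S$, in which case the group is trivially the zero free module). Therefore $H_i(W_{f^{\prime}};R)$ is a direct sum of two free $R$-modules, hence itself free.

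There is essentially no obstacle; the only thing to be careful about is quoting Proposition \ref{prop:3} in the correct form and noting that the hypothesis of Proposition \ref{prop:3} (that $f \mid_{S(f)}$ is an embedding and inverse images of regular values are disjoint unions of almost-spheres) is inherited from the statement ``In the situation of Proposition \ref{prop:3}'', so nothing additional needs to be verified. The proof thus reduces to a single line once Proposition \ref{prop:3} is established.
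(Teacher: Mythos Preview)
Your proof is correct and matches the paper's approach: the paper also treats Corollary~\ref{cor:1} as an immediate consequence of the direct-sum isomorphism in Proposition~\ref{prop:3}, with no additional argument. Your explicit mention of the degenerate case $i-(n-k)<0$ (or $>\dim S$) giving the zero module is a small clarification, but not something the paper felt the need to spell out.
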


In this paper, for a topological space $X$, we denote the Euler number of $X$ by $\chi(X)$.
This is another corollary to Proposition \ref{prop:3}

\begin{Cor}
\label{cor:2}
In the situation of Proposition \ref{prop:3}, we have
 the formula $\chi(W_{{f}^{\prime}})=\chi(W_f)+(-1)^{n-k}\chi(S)$.
\end{Cor}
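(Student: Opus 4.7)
The plan is to derive this corollary directly from Proposition \ref{prop:3} by taking $R$ to be a field, so that the Euler number can be expressed as the alternating sum of Betti numbers. The content is essentially a bookkeeping argument about how a shifted direct summand of homology affects the Euler characteristic.

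First, I would apply Proposition \ref{prop:3} with $R = \mathbb{Q}$ (any field would do) to obtain
\begin{equation*}
\dim_{\mathbb{Q}} H_i(W_{f^{\prime}};\mathbb{Q}) = \dim_{\mathbb{Q}} H_i(W_f;\mathbb{Q}) + \dim_{\mathbb{Q}} H_{i-(n-k)}(S;\mathbb{Q})
\end{equation*}
for every $i$. Since $W_f$, $W_{f^{\prime}}$, and $S$ are all compact (the Reeb spaces of stable fold maps on closed manifolds are finite polyhedra, and $S$ is a closed submanifold), their Euler numbers are given by the alternating sum of rational Betti numbers, so the above identity may be summed term by term.

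Next, I would perform the index shift $j = i - (n-k)$ in the contribution of $S$:
\begin{equation*}
\sum_i (-1)^i \dim_{\mathbb{Q}} H_{i-(n-k)}(S;\mathbb{Q}) = \sum_j (-1)^{j+(n-k)} \dim_{\mathbb{Q}} H_j(S;\mathbb{Q}) = (-1)^{n-k} \chi(S).
\end{equation*}
Combining the two pieces gives exactly $\chi(W_{f^{\prime}}) = \chi(W_f) + (-1)^{n-k}\chi(S)$.

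There is really no obstacle here: once Proposition \ref{prop:3} is in hand, this is a formal computation, and the only thing one must be slightly careful about is the sign coming from the degree shift by $n-k$. I would note in passing that the assumption $k < n$ from Proposition \ref{prop:3} ensures this shift is strictly positive, but the computation above is valid regardless of the sign of $n-k$.
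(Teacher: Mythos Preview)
Your proof is correct and is exactly the standard computation the paper has in mind: the paper states Corollary \ref{cor:2} as an immediate consequence of Proposition \ref{prop:3} without giving any further argument, and your alternating-sum-of-Betti-numbers calculation with the degree shift is precisely how one extracts it.
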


\subsection{Restrictions and flexibility on topological properties of maps and the source manifolds obtained by finite iterations of normal M-bubbling operations.}


We see restrictions and flexibility on topological properties of (the Reeb spaces of) maps and the source manifolds obtained by applying normal M-bubbling (S-bubbling) operations. We investigate the Euler numbers and homology groups of the Reeb spaces and source manifolds. 
  More precisely, we construct new fold maps from given explicit fold maps by applying normal M-bubbling (resp. S-bubbling) operations and observe that there are some topological restrictions on obtained maps and their Reeb spaces and the source manifolds. Moreover, we also show that the Euler numbers and the homology groups of obtained Reeb spaces and source manifolds
  are often various.
 
Before considering normal M-bubbling and S-bubbling operations and see topological properties of the resulting maps and source manifolds, we introduce fundamental key propositions and a class of fold maps.
 
 The following proposition includes propositions in \cite{saekisuzuoka} and \cite{kitazawa3}.

\begin{Prop}
\label{prop:4}
Let $m$ and $n$ be integers satisfying $m>n \geq 1$. Let $M$ be a closed
 and connected orientable manifold of dimension $m$ and $N$ be a manifold without boundary.
  
Then, for a simple fold map $f:M \rightarrow N$ such that inverse images of regular values
 are always disjoint unions of almost-spheres and that indices of singular points are always $0$ or $1$, two induced homomorphisms
 ${q_f}_{\ast}:{\pi}_j(M) \rightarrow {\pi}_j(W_f)$, ${q_f}_{\ast}:H_j(M;R) \rightarrow H_j(W_f;R)$, and ${q_f}^{\ast}:H^j(W_f;R) \rightarrow H^j(M;R)$ are isomorphisms for $0 \leq j \leq m-n-1$ and for any ring $R$.
 
Furthermore, if the ring $R$ is a PID and the relation $m=2n$ holds, then the rank of $M$ is twice the rank of $W_f$. In
 addition, if $H_{n-1}(W_f;R)$, which is isomorphic to $H_{n-1}(M;R)$, is free, then they are also free. 
\end{Prop}


The last statement of Proposition \ref{prop:4} on the homology groups for the $m=2n$ are not discussed
 in the mentioned papers but we easily obtain them by virtue of universal coefficient
  theorem and Poincare duality theorem. 

 Moreover, isomorphisms between the (co)homology groups or modules are not
 discussed in these papers. However, it is easy to know
 that the induced maps give mentioned isomorphisms. 

By using this proposition, we see
 restrictions and flexibility on algebraic topological properties of maps as in Example \ref{ex:1} (\ref{ex:1.2}) and the source manifolds. Most of obtained
  propositions and theorems of the present paper are stated as the statements on Reeb
   spaces and we can interpret them as ones on the source manifolds in the case of Example \ref{ex:1} (\ref{ex:1.2}). 

In addition, we review several fundamental facts on special generic maps easily understood by fundamental facts and discussions in \cite{saeki2} and we define a class of fold maps, which generalizes the class of special generic maps.

\begin{Def}
\label{def:4}
A fold map is said to be {\it almost special generic} if a fold map is obtained by an iteration of normal S-bubbling operations such that the homology groups of each generating manifold with coefficient ring $\mathbb{Z}$ are free from a special generic map.
\end{Def}

A special generic map is also an almost special generic map. 
The Reeb space of a special generic map into an Euclidean space is a compact parallelizable manifold with non-empty boundary whose dimension is equal to the dimension of the Euclidean space and that of an almost special generic map is in general not a manifold, which is a fundamental fact.  

We mention another fundamental fact on source manifolds of special generic maps. Every $m$-dimensional manifold represented as a
 connected sum of products of two standard spheres $S^{j_1}$ and $S^{j_2}$ admits a special generic map into ${\mathbb{R}}^n$ ($n \geq 2$) if the relation $1 \leq j_1 \leq n-1$ holds. Every $m$-dimensional manifold represented as a
 connected sum of products of two standard spheres $S^{j_1}$ and $S^{j_2}$ admits a special generic map into ${\mathbb{R}}^n$ if the relation $1 \leq j_1 \leq n$ holds. As an example, see also FIGURE \ref{fig:6}.
 
\begin{figure}
\includegraphics[width=80mm]{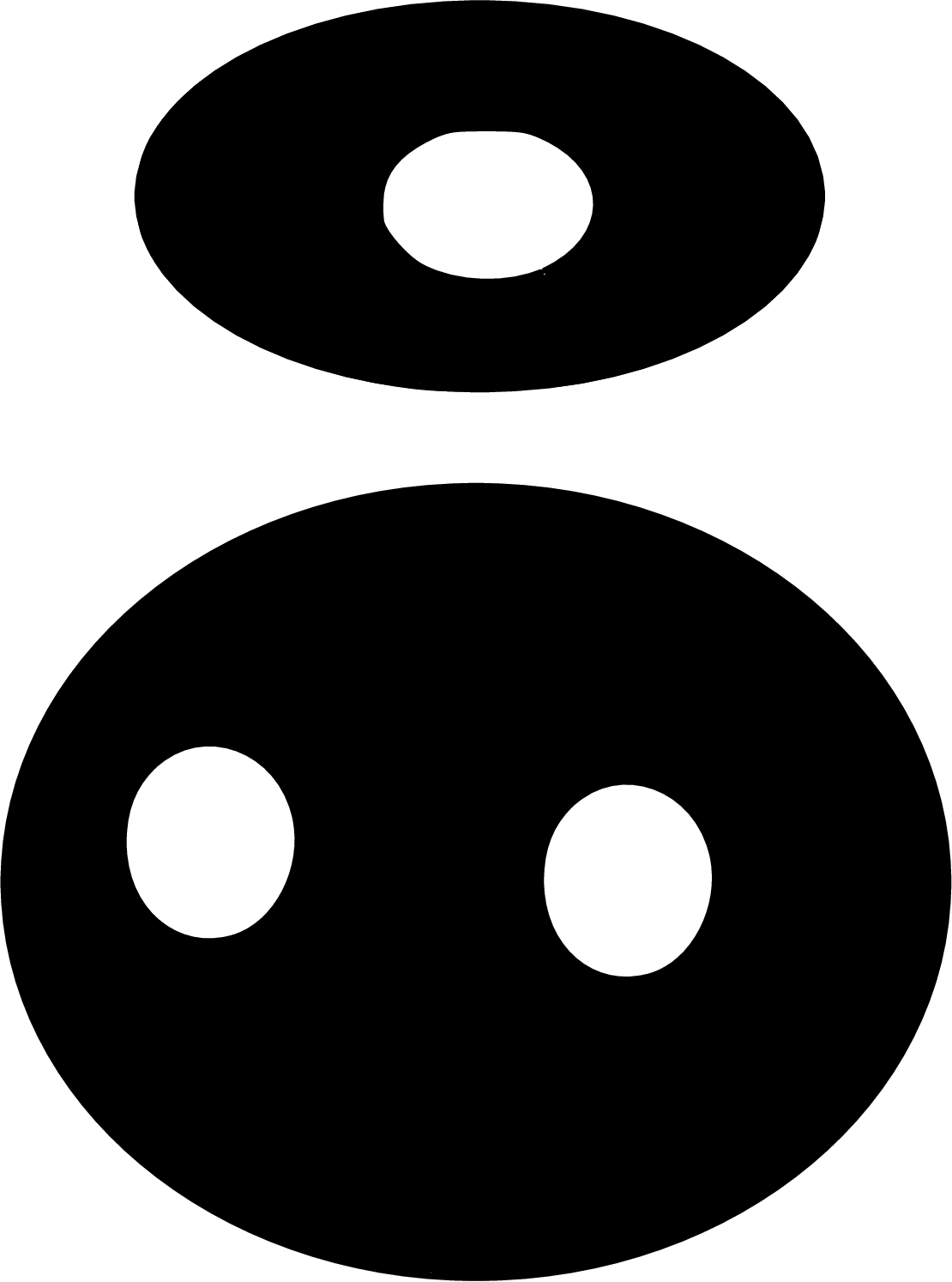}
\caption{Images of special generic maps on $S^1 \times S^{m-1}$ ($m \geq 3$) and a manifold represented as a connected sum of two copies of the manifold into the plane.}
\label{fig:6}
\end{figure}
In general, if an $m$-dimensional manifold admits a special generic map into ${\mathbb{R}}^n$, then a manifold represented as the connected sum of the previous manifold and a source manifold as in Proposition \ref{prop:2} admits an almost special generic map into ${\mathbb{R}}^n$. As an example, see also FIGURE \ref{fig:7}: note that the source manifold does not admit a special generic map into the plane by a classification theorem of manifolds admitting special generic maps into the plane in \cite{saeki2}.
\begin{figure}
\includegraphics[width=80mm]{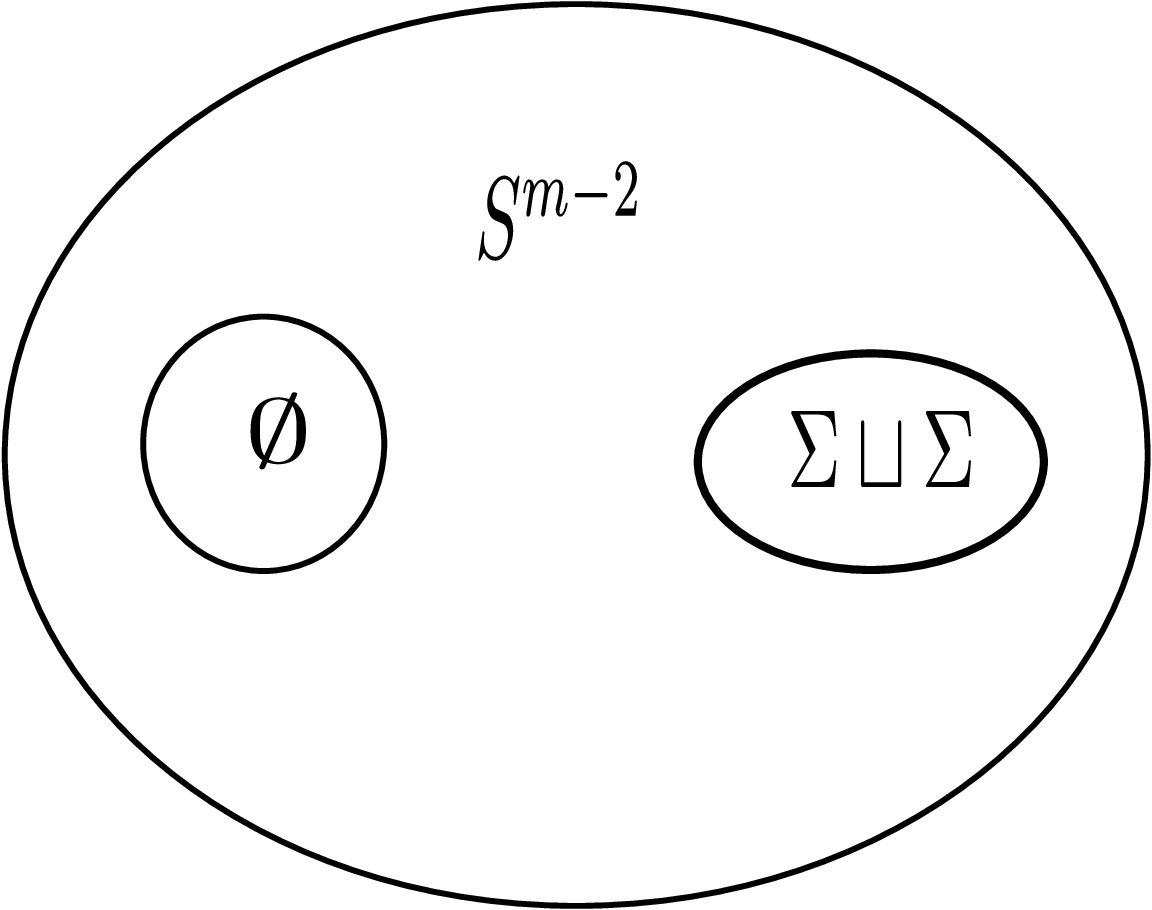}
\caption{The image of an almost special generic map on a manifold represented as a connected sum of $S^1 \times S^{m-1}$ ($m \geq 3$) and $S^2 \times \Sigma$ into the plane: manifolds and the empty set $\emptyset$ represent inverse images of corresponding regular values where $\Sigma$ is an almost-sphere.}
\label{fig:7}
\end{figure}

The following proposition follows essentially from facts on the restrictions on the homology groups of the Reeb spaces of special generic maps shown by Nishioka \cite{nishioka} together with Proposition \ref{prop:4}. We introduce the proposition without proof. 

\begin{Prop}
\label{prop:5}
Let $m$ and $n$ be integers satisfying $m-(n-2)>n\geq 3$.
If an $m$-dimensional manifold $M$ admits a special generic f into $N={\mathbb{R}}^n$. If the 1st homology groups $H_1(M;\mathbb{Z})$ and $H_1(W_f;\mathbb{Z})$ are trivial, then the {\rm (}$n-2${\rm )}-th homology groups $H_{n-2}(M;\mathbb{Z})$ and $H_{n-2}(W_f;\mathbb{Z})$ are free. 
\end{Prop}



Based on the preliminary, we now introduce some results.
First, applying Corollary \ref{cor:2}, we show the following.

\begin{Thm}
\label{thm:1}
Let $f:M \rightarrow N$ be a fold map.
\begin{enumerate}
\item
\label{thm:1.1}
 Let $n$ be even {\rm (}odd{\rm )}. If we perform a finite iteration of normal M-bubbling operations such that the Euler numbers of generating
 manifolds are not negative starting from $f$, then the resulting Euler numbers of the resulting Reeb spaces are not smaller {\rm (}resp. larger{\rm )} than the Euler number of $W_f$. Moreover, every positive integer not smaller {\rm (}resp. larger{\rm )} than this Euler number is realized as the resulting Reeb space of an iteration of normal M-bubbling operations.  
\item
\label{thm:1.2}
 Let $n$ be $1$ {\rm (}$2${\rm )}. If we perform a finite iteration of normal M-bubbling operations starting from $f$, then the resulting Euler number of the resulting Reeb space is not larger {\rm (}resp. smaller{\rm )} than the Euler number of $W_f$. 
\item
\label{thm:1.3}
 Let $n \geq 3$. By performing a finite iteration of normal M-bubbling {\rm (}S-bubbling{\rm )} operations starting from $f$, every integer is realized as the Euler number of the resulting Reeb space.
\end{enumerate}
\end{Thm}
\begin{proof}
The former part of the first statement immediately follows
 from Corollary \ref{cor:2} by considering normal M-bubbling {\rm (}S-bubbling{\rm )} surgeries whose generating manifolds are points, whose Euler numbers are $1$ (we take the points as in Example \ref{ex:1} (\ref{ex:1.1})). The latter part is easily obtained by applying a finite iteration of
 normal M-bubbling {\rm (}resp. S-bubbling{\rm )} operations such that the generating
 manifolds are points starting from $f$, by virtue of Corollary \ref{cor:2}.
  
 We easily have the second statement for $n=1$ by the fact that
  the generating manifold is always a point and by Corollary \ref{cor:2}. For $n=2$, we
   have the result by the fact that
  the generating manifold is always a point or a circle (the Euler number is $0$) and by Corollary \ref{cor:2}.
  
  We prove the third statement. We prove this in the case where $n$ is even. It is sufficient to show that
   for any integer $l$ smaller than the Euler number $\chi(W_f)$ of $W_f$, by a finite iteration of normal M-bubbling
 {\rm (}resp. S-bubbling{\rm )} operations starting from $f$, we obtain a fold map such that the resulting Euler number
    of the resulting Reeb space is $l$. We can take a closed, connected and orientable surface
     of genus $g \geq 2$, whose Euler number is $2-2g<0$, so that $\chi(W_f)+(2-2g)$ is not larger
      than $l$. We can also take $l-(\chi(W_f)+2-2g)$ distinct points which are not in the previous
       surface. By the finite iteration of normal M-bubbling {\rm (}resp. S-bubbling{\rm )} operations whose generating manifolds are homeomorphic to this surface and these points, we obtain a fold map such that the resulting number
    of the resulting Reeb space is $l$. Note that the surface and points can be taken as in Example \ref{ex:1} (\ref{ex:1.1}) or in an open ball in an appropriate connected component of the regular value set. We can prove the statement similarly in the case where $n$ is odd. 
    
\end{proof}

Next, by using Proposition \ref{prop:3} inductively, we have the following.

\begin{Thm}
\label{thm:2}
Let $R$ be a PID. Let $f:M \rightarrow N$ be a fold
 map.
 

\begin{enumerate}
\item
\label{thm:2.1}
 For any integer $0 \leq j \leq n$, we define $G_j$ as a free finitely generated module over $R$ so
  that $G_0$ is a trivial module and $G_n$ is not
 a trivial module, and the sum ${\sum}_{j=1}^{n-1} {\rm rank} \quad G_j$ of the ranks
 of $G_j$ is not larger than the rank of $G_n$. Then, by a finite iteration
 of normal M-bubbling {\rm (}S-bubbling{\rm )} operations starting from $f$, we obtain a fold map $f^{\prime}$ and $H_j(W_{{f}^{\prime}};R)$ is isomorphic to $H_j(W_f;R) \oplus G_j$. 

\item
\label{thm:2.2}
 Let $f^{\prime}$ be a fold map obtained by a finite iteration of
 normal M-bubbling operations starting from $f$. Then, $H_0(W_{f^{\prime}};R)$ is
 $R$ and $H_n(W_{f^{\prime}};R)$ is represented as the product sum of the original group $H_n(W_f;R)$ and a finitely generated and free module $G$.

Suppose that for a positive integer $k$ and for the Reeb
 space $W_{{f}^{\prime}}$, $H_j(W_{{f}^{\prime}};R)$ is isomorphic to $H_j(W_f;R)$ for $0<j<k$ and that in the case of $H_k(W_{{f}^{\prime}};R)$, this fact does not hold, then
 $H_k(W_{{f}^{\prime}};R)$ is represented as a product sum of $H_k(W_f;R)$ and a finitely generated and free module $G^{\prime}$ and the rank of $G^{\prime}$ is
 not larger than that of $G$.
  Especially, if these ranks coincide, then all generating manifolds of normal S-bubbling operations performed are {\rm (}$n-k${\rm )}-dimensional and two modules $G_j$ and ${G_j}^{\prime}$ such that 
  the product sum of $H_{n-k+j}(W_f;R)$ and $G_j$ is isomorphic to $H_{n-k+j}(W_{{f}^{\prime}};R)$ and
   that the product sum of $H_{n-j}(W_f;R)$ and ${G_j}^{\prime}$ is isomorphic to $H_{n-j}(W_{{f}^{\prime}};R)$ coincide for any integer $0 \leq j \leq k$.  
\item
\label{thm:2.3}
 Let $f^{\prime}$ be a fold map obtained by a finite iteration of
 normal M-bubbling operations starting from $f$. Then, $H_{n-1}(W_{f^{\prime}};R)$
 is represented as a module $H_{n-1}(W_f;R)$ and a finitely generated and free module.
\item
\label{thm:2.4}
Let $n \geq 3$ and odd {\rm (}even{\rm )}. Let $f^{\prime}$ be a fold map obtained by a finite iteration of
 normal M-bubbling operations such that the dimensions of the generating
  manifolds are smaller than $4$ starting from $f$. Assume that the Euler number
   $\chi(W_{{f}^{\prime}})$ is smaller {\rm (}resp. larger{\rm )} than the original Euler
    number $\chi(W_{f})$. Let $l$
    be the minimal integer not smaller than $\frac{|\chi(W_{{f}^{\prime}})-\chi(W_{f})|}{2}$.
     Then the
    rank of $H_n(W_{{f}^{\prime}};R)$ minus that of $H_n(W_{{f}^{\prime}};R)$ must be not smaller than $l$.
\item
\label{thm:2.5}
Let $n \leq 6$ and odd {\rm (}even{\rm )}. Let $f^{\prime}$ be a fold map obtained by a finite iteration of
 normal M-bubbling operations. If the difference $\chi(W_{{f}^{\prime}})-\chi(W_{f})$ is an odd number and smaller {\rm (}resp. larger{)\rm } than $-1$ ({\rm (}resp. 1{)\rm }), then, the
    rank of $H_n(W_{{f}^{\prime}};R)$ minus that of $H_n(W_{f};R)$ must be larger than $1$.
\end{enumerate}   
\end{Thm}
\begin{proof}
  We prove each statement in the case that the map $f$ is the canonical projection of the standard sphere or more precisely, in the case where the modules $H_{\ast}(W_f;R)$ and $ H_{\ast}(D^n;R)$ are isomorphic.
  
  We prove the first statement. Set $g_j:={\rm rank} G_j$. We can
 choose a disjoint union of $g_j$ copies of the ($n-j$)-dimensional standard
 spheres for $0<j \leq n$ smoothly embedded in an open disc in the interior ${\rm Int} f(M)$ of
 the image and $g_n-{\sum}_{j=1}^{n-1} g_j$ points in ${\rm Int} f(M)$. We apply normal M-bubbling  operations
 such that the generating
 manifolds are inverse images of spheres or points in the previous disjoint
 union by the maps from the Reeb spaces to ${\mathbb{R}}^n$ as in Definition \ref{def:2} one after
 another starting from the given map $f$; in fact we can apply strongly trivial normal M-bubbling operations.
Thus, by virtue of Proposition \ref{prop:3}, we can obtain a desired map.
 If we choose a disjoint union of $g_j$ ($n-j$)-dimensional
 manifolds whose homology types with coefficient rings $R$ are
 isomorphic to that of an ($n-j$)-dimensional
 sphere for $0<j<n$ smoothly embedded in the interior ${\rm Int} f(M)$ and $g_n-{\sum}_{j=1}^{n-1} g_j$ points there so that we can demonstrate a finite iteration of normal M-bubbling operations, then we always obtain
 a desired map.
  We prove the second statement. $H_0(W_{f^{\prime}};R)$ is
 $R$ and $H_n(W_{f^{\prime}};R)$ is finitely generated and free; by
 Proposition \ref{prop:3}, this easily follows and the rank
 of $H_n(W_{f^{\prime}};R)$ equals the number of the time of normal M-bubbling operations we need to obtain $f^{\prime}$.
By Proposition \ref{prop:3}, the maximum of the dimensions
 of the generating manifolds of these normal M-bubbling operations must be $n-k$. It
 also follows that $H_k(W_{{f}^{\prime}};R)$ is free and that its rank
 and the number of the time of normal M-bubbling operations with generating manifolds of
 dimension $n-k$ coincide. Thus the rank is not larger than the ran of $H_n(W_{f^{\prime}};R)$. \\
 The extra statements are obvious. In fact, if these ranks coincide, then all the generating manifolds of normal M-bubbling operations performed must be {\rm (}$n-k${\rm )}-dimensional and the ranks
   of $H_{n-k+j}(W_f;R)$ and $H_{n-j}(W_f;R)$ coincide for any integer $0 \leq j \leq k$ by virtue of Poincare duality theorem for each generating manifold and Proposition \ref{prop:3}.  \\
  We prove the third statement. Let $M_1$ and $M_2$ be $m$-dimensional, smooth, closed and connected manifolds, $N$ be an $n$-dimensional smooth manifold without boundary and $f_1:M_1 \rightarrow N$ be
 a fold map. Let $f_2:M_2 \rightarrow N$ be a fold map obtained
 by a normal M-bubbling operation to $f_1$ whose generating manifold is $S$ and of dimension $k$. By Proposition
 \ref{prop:3}, we
 have $H_{n-1}(W_{f_2};R) \cong H_{n-1}(W_{f_1};R) \oplus (H_{k-1}(S;R) \otimes R)$. Note
 that $H_{k-1}(S;R)$ is free; if $k$ is $1$ or $2$, then $S$ is, by the assumption, orientable
 and $H_{k-1}(S;R)$ is finitely generated and free, and if $k$ is larger
 than $2$, then $H^1(S;R)$ is finitely generated and free and by virtue of the Poincare
 duality theorem together with the assumption that $S$ is orientable, so is $H_{k-1}(S;R)$. By the induction, we have the mentioned result. 
 
 We prove the fourth statement. We prove this in the case
  where $n$ is odd. If the Euler number of a connected closed orientable manifold
   of dimension smaller than $4$ is not zero, then the manifold is a point
    or $2$-dimensional and the Euler number is $1$ or represented as $2-2g$ for a non-negative integer $g$, respectively. The minimum of
     the rank of $H_n(W_{{f}^{\prime}};R)$ is realized
      by considering a finite iteration of normal M-bubbling operations whose generating
       manifolds are a $2$-dimensional sphere in the case where the difference of Euler numbers $\chi(W_{{f}^{\prime}})-\chi(W_{f})$ is even and the rank is $\frac{|\chi(W_{{f}^{\prime}})-\chi(W_{f})|}{2}$. In the case where the difference is odd, the minimum of the rank is realized by considering a finite iteration of normal S-bubbling operations such that the generating manifold of one of the operations is a point and that the generating manifolds of the others are $2$-dimensional spheres and the rank is $\frac{|\chi(W_{{f}^{\prime}})-\chi(W_{f})|-1}{2}+1$. We can
          prove the statement in the case where $n$ is even similarly.
          
  We prove the last statement. 
  The Euler numbers of odd dimensional closed and connected orientable manifolds are $0$. The Euler numbers of even dimensional closed and connected orientable manifolds we can embed into manifolds whose dimensions are not larger than $6$ must be even numbers or odd numbers which are smaller than $2$. The last part of this fact on Euler numbers of even dimensional closed and connected orientable manifolds is shown by the following.   
   By a fundamental discussion on characteristic classes or Stiefel-Whitney classes, $4$-dimensional manifolds we can embed into $5$ or $6$-dimensional manifolds are spin and their 2nd Stiefel-Whitney classes vanish and the Euler numbers are always even. $0$-dimensional closed and connected orientable manifolds are points and the Euler numbers are $1$ and for $2$-dimensional ones, the Euler numbers are even and not larger than $2$.           
   By virtue of Corollary \ref{cor:2} and these facts on embeddings of manifolds, by a single M-bubbling operation or equivalently, in the case
    where the
    rank of $H_n(W_{{f}^{\prime}};R)$ minus that of $H_n(W_{f};R)$ is $1$, for the resulting fold map ${f}^{\prime}$, the value $\chi(W_{{f}^{\prime}})-\chi(W_{f})$ cannot be an odd number not larger than $-3$ in the case where $n$ is odd and cannot be an odd number not smaller than $3$.

  This completes the proof.   
\end{proof}

Let us see cohomology groups. To simplify the problem, let us assume that each module $H_k(W_f;R)$ of the original Reeb space $W_f$ is free. 
 By universal coefficient theorem, in the situation of Theorem \ref{thm:2} (\ref{thm:2.1}),
 $H^j(W_{f^{\prime}};R)$ is isomorphic to $H_j(W_{f^{\prime}};R)$ for $0 \leq j \leq n$ and in the situation
 of (\ref{thm:2.2}) and (\ref{thm:2.3}), $H^j(W_{f^{\prime}};R)$
 is isomorphic to $H_j(W_{f^{\prime}};R)$ for $0 \leq j \leq k$ and $j=n$ and $H^{k+1}(W_{f^{\prime}};R)$ is isomorphic to ${\rm Hom}_{R}(H_{k+1}(W_{f^{\prime}};R),R)$. 

\begin{Ex}
\label{ex:2}
\begin{enumerate}
\item \label{ex:2.1}
We can say that in \cite{kitazawa} and \cite{kitazawa2}, maps
 in Proposition \ref{prop:2} have been obtained by finite iterations of (strongly) trivial normal
 S-bubbling operations
 whose generating manifolds are points starting from a round fold map whose singular set is connected on the standard sphere of dimension $m$ into the ${\mathbb{R}}^n$. Note that the Reeb
 spaces are simple homotopy equivalent to the bouquet of $l-1$ copies of $S^n$. This is a specific case of Theorem \ref{thm:2} (\ref{thm:2.1}). 

In \cite{kobayashi2}, Kobayashi invented a {\it bubbling
 surgery} to a stable (fold) map and we can also say that
 maps in Proposition \ref{prop:2} have been obtained by finite iterations of
 bubbling surgeries.   

More precisely, a {\it bubbling surgery} is a surgery operation
 to construct a new stable fold
 map from a given stable fold map so that the singular value set of the new map
 is the disjoint union of that of the given map and a sphere bounding a closed disc whose
 interior does not contain singular values and that the outside of a small neighborhood
 of the disc does not change. For this, see FIGURE \ref{fig:2} and FIGURE \ref{fig:3}, some of figures in which
 represent bubbling surgeries explicitly. Note that a bubbling surgery is a normal bubbling operation whose generating manifold is a point.    
\item \label{ex:2.2}
In the situation of Theorem \ref{thm:2} (\ref{thm:2.3}), if $m=2n$ holds, each module $H_j(W_f;R)$ is free and $f$ is a fold map satisfying the condition in Proposition \ref{prop:4} posed on the map, then by the proposition, $H_j(M;R)$ and $H_j(W_f;R)$ are isomorphic for $0 \leq j \leq m-n-1=n-1$ and $H_n(M;R)$ is an R-module whose rank is twice the rank of $H_n(W_f;R)$. In addition, $H_{n-1}(W_f;R)$, which is isomorphic to $H_{n-1}(M;R)$, is free, so the two modules $H_n(M;R)$ and $H_n(W_f;R)$ are also free.
\end{enumerate}
\end{Ex}


  For example, Theorem \ref{thm:1} (\ref{thm:1.1}) and (\ref{thm:1.3}) and Theorem \ref{thm:2} (\ref{thm:2.1}) show flexibility of Euler numbers and homology groups of Reeb spaces and as a result those of source manifolds of the resulting maps explicitly.

  The following theorem also shows such flexibility.

\begin{Thm}
\label{thm:3}

Let $k$ and $n$ be positive integers and let $l$ be a non-negative integer satisfying $l<l+1<k<n$. 
Let $S$ be a closed, connected and orientable manifold of dimension
 $n-k$ which we can embed into ${\mathbb{R}}^{n-l}$. 

Let $R$ be a PID. Let $f:M \rightarrow N$ be a fold map

Then, by a normal M-bubbling {\rm (}S-bubbling{\rm )} operation to $f$, we have
 a fold map $f^{\prime}:M^{\prime} \rightarrow N$ satisfying the following condition.
 
$H_j(W_{f^{\prime}};R) \cong \begin{cases} R \quad (j=0) \\
H_j(W_{f};R) \oplus H_{j-l-1}(S;R) \quad (l+1 \leq j \leq k-1) \\
H_j(W_{f};R) \oplus H_{j-l-1}(S;R) \oplus H_{j-k}(S;R) \quad (k \leq j \leq n-1) \\
H_n(W_{f};R) \oplus R \quad (j=n) \\
H_j(W_{f};R) \quad (otherwise)
\end{cases}      
$
\end{Thm}
\begin{proof}
We can embed $S$ into ${\mathbb{R}}^{n-l}$ and there exists a manifold $S^{\prime}$ regarded as the
 total space of an oriented linear $S^{k-l-1}$-bundle over $S$ whose
 Euler class vanishes and which we can embed into ${\mathbb{R}}^n$.
 For any PID $R$, we have  

$H_j(S^{\prime};R) \cong \begin{cases} H_j(S;R) \quad (0 \leq j \leq k-l-2) \\
H_j(S;R) \oplus H_{j-k+l+1}(S;R)  \quad (k-l-1 \leq j \leq n-l-2) \\
H_j(S;R) \quad (j=n-l-1)
\end{cases}      
$.

By virtue of Proposition \ref{prop:3}, by a normal M-bubbling (S-bubbling) operation to $f$ such
 that the generating manifold is ${\bar{f}}^{-1}(S^{\prime})$, we obtain a desired map $f$; for example, take $S$ in an open ball in an appropriate connected component of the regular value set as in Example \ref{ex:1} (\ref{ex:1.1}) and perform a trivial normal M-bubbling (resp. S-bubbling) operation.
\end{proof}

As a specific case, we have the following.

\begin{Cor}
\label{cor:3}

Let $S$ be a closed, connected and orientable manifold whose dimension
 is not smaller than $1$ and not larger than $\frac{n}{2}$. Set $l$ be
 an integer satisfying $0 \leq l \leq n-2\dim S$ and if $\dim S=1$ holds or $S$ is a circle, then
 assume that $0 \leq l <n-2\dim S$ holds.

Let $R$ be a PID. Let $f:M \rightarrow N$ be a fold map. 

Then, by a normal M-bubbling {\rm (}S-bubbling{\rm )} operation to $f$, we have
 a fold map $f^{\prime}:M^{\prime} \rightarrow N$
satisfying the following.

$H_j(W_{f^{\prime}};R) \cong \begin{cases} 
R \quad (j=0) \\
H_j(W_f;R) \oplus H_{j-l-1}(S;R) \quad (l+1 \leq j \leq n-\dim S-1) \\
H_j(W_f;R) \oplus H_{j-l-1}(S;R) \oplus H_{j-n+\dim S}(S;R)\\
 (n-\dim S \leq j \leq n-1) \\
H_n(W_f;R) \oplus R \quad (k=n) \\
H_j(W_f;R) \quad (otherwise)
\end{cases}      
$.

\end{Cor}

\begin{proof}
We can embed any closed manifold X into ${\mathbb{R}}^{2 \dim X}$ by virtue of Whitney embedding
 theorem. From this fact, in the situation
 of Theorem \ref{thm:4}, we can
 set $n-k:=\dim S$ and we have $l<l+1<k<n$. Thus, we have the result.

\end{proof}

\begin{Ex}
\label{ex:3}
Let $G$ be any finitely generated commutative group. There
 exists a closed, connected and orientable $3$-dimensional
 manifold $S$ satisfying $H_1(S;\mathbb{Z})=G$. We can
 embed $S$ into ${\mathbb{R}}^n$ where $n \geq 5$ holds
 (\cite{wall}). For such $n$, we can apply Proposition \ref{prop:3} to a fold map $f:M \rightarrow N$ such that for the resulting
 map $f^{\prime}:M^{\prime} \rightarrow N$, $H_{j}(W_{f^{\prime}};\mathbb{Z})$ is isomorphic to
 $H_{j}(W_{f^{\prime}};\mathbb{Z})$ for $0<j \leq n-4$
 and $H_{n-3}(W_{f^{\prime}};\mathbb{Z})$ is isomorphic to the direct sum of $H_{n-3}(W_{f};\mathbb{Z})$
 and $\mathbb{Z}$ and $H_{n-2}(W_{f^{\prime}};\mathbb{Z})$ is isomorphic to the direct sum of
  $H_{n-2}(W_{f};\mathbb{Z})$ and $G$. By choosing suitable integers $k \geq 1$ and $l \geq 0$, we can
 also apply Theorem \ref{thm:2} or, for $n \geq 6$, Corollary \ref{cor:1}. In this case, for the map $f^{\prime}$, $H_{l+1}(W_{f^{\prime}};\mathbb{Z})$ is
 isomorphic to $H_{l+1}(W_{f};\mathbb{Z})$ and $\mathbb{Z}$ and $H_{l+2}(W_{f^{\prime}};\mathbb{Z})$
 is isomorphic to the direct sum of $H_{l+2}(W_{f};\mathbb{Z})$ and $G$ or $G \oplus \mathbb{Z}$. 
\end{Ex}

  Related to this example and the presented theorem and corollary, we show explicit result on flexibility of homology groups of Reeb spaces of maps obtained by normal M-bubbling (S-bubbling) operations whose generating manifolds are as mentioned in the example or the theorem and corollary.
  
Let $n$ be an integer larger than $6$ and let $f:M \rightarrow N$ be a fold map.
 Let $j$ be an integer satisfying $0 \leq j \leq n-7$ and let $G_j$ be a finite
 commutative group. Let $\{g_j\}_{j=0}^{n-7}$ be a sequence of integers satisfying the following.
 \begin{itemize}
 \item All the integers are not smaller than $-1$.
 \item If $g_j=-1$, then $G_j$ is a trivial group.
 \end{itemize} 
 
  There exists a $3$-dimensional closed and connected orientable
  manifold $S_j$ satisfying $H_1(S_j;\mathbb{Z}) \cong G_j$, $H_2(S_j;\mathbb{Z}) \cong \{0\}$
   and $H_0(S_j;\mathbb{Z}) \cong H_3(S_j;\mathbb{Z})=\mathbb{Z}$ (for example consider connected sums of Lens spaces). If $g_j>-1$ holds, then we
   can choose the family of disjoint submanifolds in $N$ satisfying the following two.
   \begin{enumerate}
   \item The family includes a manifold diffeomorphic to $S_j \times S^{n-j-4}$.
   \item The number of manifolds whose dimensions are $\dim (S_j \times S^{n-j-4})$ is $g_j+1$ and except the manifold diffeomorphic to $S_j \times S^{n-j-4}$, homology types of these manifolds with coefficient rings $\mathbb{Z}$ are same as that of a sphere.
   \end{enumerate} 
  We also define a sequence $\{h_j\}_{j=1}^n$ of non-negative integers satisfying the following.
 \begin{itemize}
 \item if $g_j \geq 0$ ($0 \leq j \leq n-7$), then $h_{j+1}$ is $0$.
 \item $h_n$ is not smaller than the sum $\sum_{j_1}^{n-1} h_j$
 \end{itemize} 
   
   We can choose a family of disjoint submanifolds in $N$ satisfying the following.
\begin{itemize}
\item As ($n-j$)-dimensional manifolds, the family include just $h_j$ manifolds whose homology groups with coefficients $\mathbb{Z}$ are isomorphic to
 that of an ($n-j$)-dimensional sphere for $j \neq n-3,n$. 
\item As ($n-j$)-dimensional manifolds, the family include just $h_j$ manifolds whose homology groups with coefficient rings $\mathbb{Q}$ are isomorphic to  
 that of an ($n-j$)-dimensional sphere or manifolds whose homology groups with coefficient rings $\mathbb{Z}$ are
  represented as $3$-dimensional manifolds $S_k$ ($0 \leq k \leq n-7$) just before for $j=n-3$. 
\end{itemize}   
   
   We can take each submanifold here in an open ball in an appropriate connected component of the regular value set as explained in the presentation of a strongly trivial normal bubbling operation in Example \ref{ex:1} (\ref{ex:1.2}).

   By demonstrating normal M-bubbling (S-bubbling) operations, for example, strongly trivial normal M-bubbling (resp. S-bubbling) operations, such that the generating manifold is the inverse images of each submanifolds by the natural maps from the Reeb spaces into
    $N$ one after another and applying Proposition \ref{prop:3}, we obtain a new map $f^{\prime}:M^{\prime} \rightarrow N$ as in the following theorem. 

\begin{Thm}
\label{thm:4}
Let $m$ and $n$ be positive integers satisfying $m>n \geq 7$. Let $f:M \rightarrow N$ be a fold map

For integers $0 \leq j \leq n-7$, we introduce integers $g_j$ and finite
 commutative groups $G_j$ satisfying the following two.
 \begin{enumerate}
 \item $g_j \geq -1$.
 \item If $g_j=-1$, then $G_j$ is a trivial group.
 \end{enumerate}

For any integer $1 \leq j \leq n$, we also introduce non-negative integers $h_j$ satisfying the following.
\begin{enumerate}
\item If $g_j \geq 0$ {\rm (}$0 \leq j \leq n-7${\rm )}, then $h_{j+1}$ is $0$.
\item $h_n$ is not smaller than the sum $\sum_{j_1}^{n-1} h_j$.
\end{enumerate} 

Let $H$ be a finite commutative group and set groups $\{H_j\}_{j=1}^{n}$ as
\begin{itemize}
\item $H_j$ is isomorphic to $H_j(W_f;\mathbb{Z}) \oplus {\mathbb{Z}}^{h_j}$ for $j \neq n-2$
\item $H_j$ is isomorphic to $H_j(W_f;\mathbb{Z}) \oplus {\mathbb{Z}}^{h_j} \oplus H$ for $j=n-2$.
\end{itemize}
By a finite iteration of {\rm (}strongly trivial{\rm )} normal M-bubbling {\rm (}S-bubbling{\rm )} operations such that the generating manifold of each operation
  is a product of a $3$-dimensional manifold and a standard sphere or a manifold having the same homology type with coefficient ring $\mathbb{Z}$ as that of a sphere starting from $f$, we obtain a map $f^{\prime}:M^{\prime} \rightarrow N$ satisfying the following;
   
 $$H_i(W_{{f}^{\prime}};\mathbb{Z}) \cong 
 \begin{cases}
 \mathbb{Z} \quad {\rm (}i=0{\rm )} \\
 H_i \oplus {\mathbb{Z}}^{g_0+1} \quad {\rm (}i=1{\rm )}\\
 H_i \oplus {\mathbb{Z}}^{g_{i-1}+1} \oplus G_{i-2} \quad {\rm (}i=2,3{\rm )}\\
 H_i \oplus {\oplus}_{j=0}^{n-7} G_j \quad {\rm (}i=n-2{\rm )} \\ 
 H_i \quad {\rm (}i=n-1{\rm )} \\
 H_i \oplus {\mathbb{Z}}^{({\sum}_{j=0}^{n-7} g_j)+n-6} \quad {\rm (}i=n{\rm )} \\
 H_i \oplus  {\mathbb{Z}}^{g_{i-1}+2} \oplus G_{i-2} \quad {\rm (}4 \leq i \leq n-6,g_{i-4}>-1{\rm )}\\ 
 H_i \oplus {\mathbb{Z}}^{g_{i-1}+1} \oplus G_{i-2} \quad {\rm (}4 \leq i \leq n-6,g_{i-4}=-1{\rm )}\\
 H_i \oplus {\mathbb{Z}} \oplus G_{n-7} \quad {\rm (}4 \leq i=n-5, g_{n-9}>-1{\rm )} \\
 H_i \oplus G_{n-7} \quad {\rm (}4 \leq i=n-5, g_{n-9}=-1{\rm )} \\
 H_i \oplus {\mathbb{Z}} \quad {\rm (}4 \leq i=n-4,g_{n-8}>-1{\rm )} \\
 H_i \quad {\rm (}4 \leq i=n-4,g_{n-8}=-1{\rm )} \\
 H_i \oplus {\oplus}_{j \in \{j^{\prime}|g_{j^{\prime}}>-1\}} H_{n-j-4}(S_j \times S^{n-j-4};\mathbb{Z}) \quad {\rm (}i=n-3{\rm )}
\end{cases}$$.
\end{Thm}

We present precise presentations on the homology groups which are important in knowing the types of homology groups to lead us to the result of Theorem \ref{thm:4} and this gives a proof of this theorem. We abuse notation just before Theorem \ref{thm:4}.

We consider the case where the two groups $H_{*}(W_f;\mathbb{Z})$ and $H_{*}(D^n;\mathbb{Z})$ are isomorphic. In addition, we also assume that the group $H$ is trivial and $h_j=0$. Of course we can consider similarly in general cases and for general cases, we can obtain the result easily by Proposition \ref{prop:3} : moreover, if  $h_j$ and the groups $H_j$ and $H$ are general, then we need $h_j$ ($n-j$)-dimensional manifolds whose homology groups with coefficient rings $\mathbb{Z}$
 are isomorphic to that of an ($n-j$)-dimensional sphere for $j \neq n-3, n$, $h_n-\sum_{j_1}^{n-1} h_j$ points and and a family of $h_j$ 3-dimensional manifolds satisfying the following for $j=n-3$.
 
\begin{itemize}
\item If $h_j=0$, then the family is empty.
\item If $h_j>0$, then the family includes just one $3$-dimensional manifold such that the homology group with coefficient ring $\mathbb{Q}$ is isomorphic to the homology group with coefficient ring $\mathbb{Q}$ of a sphere and that the 1st homology group with coefficient ring $\mathbb{Z}$ is $H$, and for all the other manifolds of the family, the homology groups with coefficient rings $\mathbb{Z}$ are isomorphic to the homology group with coefficient ring $\mathbb{Z}$ of a sphere.    
\end{itemize}  

We have
$$H_1(W_{{f}^{\prime}};\mathbb{Z}) \cong 
\begin{cases}
 H_0(S_0 \times S^{n-4};\mathbb{Z}) \oplus {\mathbb{Z}}^{g_0} \quad {\rm (}g_0 \neq -1{\rm )}\\
\{0\} \quad {\rm (}g_0=-1{\rm )}
\end{cases}, $$
$$H_2(W_{{f}^{\prime}};\mathbb{Z}) \cong
 \begin{cases} 
 H_0(S_1 \times S^{n-5};\mathbb{Z})
 \oplus {\mathbb{Z}}^{g_1} \oplus H_1(S_0 \times S^{n-4};\mathbb{Z}) \quad {\rm (}g_0>-1,g_1>-1{\rm )}\\
  H_0(S_1 \times S^{n-5};\mathbb{Z}) \oplus {\mathbb{Z}}^{g_1} \quad {\rm (}g_0=-1,g_1>-1{\rm )}\\ 
 {\mathbb{Z}}^{g_1+1} \oplus H_1(S_0 \times S^{n-4};\mathbb{Z}) \quad {\rm (}g_0>-1,g_1=-1{\rm )}\\
 {\mathbb{Z}}^{g_1+1} \quad {\rm (}g_0=-1,g_1=-1{\rm )}\\
 \end{cases}
 $$

$$H_3(W_{{f}^{\prime}};\mathbb{Z}) \cong
 \begin{cases}
  H_0(S_2 \times S^{n-6};\mathbb{Z})
 \oplus {\mathbb{Z}}^{g_2} \oplus H_1(S_1 \times S^{n-3};\mathbb{Z}) \quad {\rm (}g_1>-1,g_2>-1{\rm )}\\
   H_0(S_2 \times S^{n-6};\mathbb{Z}) \oplus {\mathbb{Z}}^{g_2} \quad {\rm (}g_1=-1,g_2>-1{\rm )}\\
H_1(S_1 \times S^{n-3};\mathbb{Z}) \quad {\rm (}g_1>-1,g_2=-1{\rm )}\\
\{0\} \quad {\rm (}g_1=-1,g_2=-1{\rm )}
\end{cases}.$$

Let $\tilde{H_j}=H_{1+(n-j-4)}(S_j \times S^{n-j-4};\mathbb{Z}) \cong G_j$ for $g_j>-1$ and let $\tilde{H_j}=G_j \cong \{0\}$
 for $g_j=-1$. We have

$$H_{n-2}(W_{{f}^{\prime}};\mathbb{Z}) \cong {\oplus}_{j=0}^{n-7} \tilde{H_j} \cong {\oplus}_{j=0}^{n-7} G_j$$.

$2+(n-j-4)=n-j-2$ and the ($n-j-2$)-th homology group of each generating
 manifold vanishes. These facts give us $H_{n-1}(W_{{f}^{\prime}};\mathbb{Z}) \cong \{0\}$. 
 
 $H_{n}(W_{{f}^{\prime}};\mathbb{Z})$ is finitely generated free group and
  its rank equals the number of times of normal M-bubbling (S-bubbling) operations: the sum of $g_j+1$ for all $0 \leq j \leq n-7$. 
  For $4 \leq i \leq n-6$, we have the following;
  $$H_i(W_{{f}^{\prime}};\mathbb{Z}) \cong
  \begin{cases}
   H_0(S_{i-1} \times S^{n-i-3};\mathbb{Z}) \oplus {\mathbb{Z}}^{g_{i-1}}
    \oplus H_3(S_{i-4} \times S^{n-i};\mathbb{Z}) \oplus G_{i-2} \\
     \cong {\mathbb{Z}}^{g_{i-1}+2} \oplus G_{i-2} \quad {\rm (}g_{i-1}>-1,g_{i-4}>-1{\rm )}\\
H_3(S_{i-4} \times S^{n-i};\mathbb{Z}) \oplus G_{i-2} \\
\cong {\mathbb{Z}}^{g_{i-1}+2} \oplus G_{i-2} \quad {\rm (}g_{i-1}=-1,g_{i-4}>-1{\rm )}\\
 H_0(S_{i-1} \times S^{n-i-3};\mathbb{Z}) \oplus {\mathbb{Z}}^{g_{i-1}} \oplus G_{i-2} \\
 \cong {\mathbb{Z}}^{g_{i-1}+1} \oplus G_{i-2} \quad {\rm (}g_{i-1}>-1,g_{i-4}=-1{\rm )}\\
 G_{i-2} \cong {\mathbb{Z}}^{g_{i-1}+1} \oplus G_{i-2}  \quad {\rm (}g_{i-1}=-1,g_{i-4}=-1{\rm )}    
    \end{cases}.$$
    For $4 \leq i=n-5$, we have the following;
$$H_i(W_{{f}^{\prime}};\mathbb{Z}) \cong \begin{cases}
   H_3(S_{n-9} \times S^5;\mathbb{Z}) \oplus H_1(S_{n-7} \times S^3;\mathbb{Z}) \cong \\
   \mathbb{Z} \oplus G_{n-7}  \quad {\rm (}g_{n-9}>-1,g_{n-7}>-1{\rm )} \\
   H_1(S_{n-7} \times S^3;\mathbb{Z}) \cong G_{n-7}  \quad {\rm (}g_{n-9}=-1,g_{n-7}>-1{\rm )}\\
   H_3(S_{n-9} \times S^5;\mathbb{Z}) \cong \mathbb{Z} \oplus G_{n-7}  \quad {\rm (}g_{n-9}>-1,g_{n-7}=-1{\rm )}\\
   \{0\} \cong G_{n-7}  \quad {\rm (}g_{n-9}=-1,g_{n-7}=-1{\rm )}   
   \end{cases}.
   $$
   For $4 \leq i=n-4$, we have the following;
   $$H_i(W_{{f}^{\prime}};\mathbb{Z}) \cong \begin{cases}
   H_3(S_{n-8} \times S^4;\mathbb{Z}) \cong \mathbb{Z} \quad {\rm (}g_{n-8}>-1{\rm )}\\
   \{0\} \quad {\rm (}g_{n-8}=-1{\rm )}
   \end{cases}.
   $$
   
   Last, we have the following:
   $H_{n-3}(W_{{f}^{\prime}};\mathbb{Z})$ is represented as the direct sum of all
    the groups $H_{n-j-4}(S_j \times S^{n-j-4};\mathbb{Z})$ for $j$ such that $g_j>-1$. 

This completes the precise presentations of homology groups.

We can also find homology types of Reeb spaces of maps we cannot obtain by constructions of maps by normal M-bubbling (S-bubbling) operations as the following.
   
\begin{Thm}
\label{thm:5}
Let $m$ and $n$ be positive integers satisfying the relation $m>n$. Let $f:M \rightarrow N$ be a fold
 map such that for a sequence $\{G_j\}_{j=1}^{n}$ of finitely generated commutative groups
  , $H_j(W_{{f}^{\prime}};\mathbb{Z})$ is represented as $H_{j}(W_f;\mathbb{Z}) \oplus G_j$. 

Let us consider a finite iteration of normal M-bubbling {\rm (}S-bubbling{\rm )} operations starting from $f$ satisfying the following.
\begin{enumerate}
\item This iteration consists of $k$ normal M-bubbling {\rm (}resp. S-bubbling{\rm )} operations such that the
 generating manifold of the $j$-th operation is $S_j$ where the relation $j \leq k$ holds.   
\item Let $i_0$ be a positive integer. For each $S_{j_1}$ and for
 any homology group $H_{j_2}(S_{j_1};\mathbb{Z})$ isomorphic
 to a non-trivial finite group, then for some positive
  integer $i_j \leq i_0$, $H_{j_2-i_j}(S_{j_1};\mathbb{Z})$ {\rm (}$H_{j_2+i_j}(S_{j_1};\mathbb{Z})${\rm )} is an infinite group.   
\end{enumerate}
Then by the iteration of normal M-bubbling {\rm (}resp. S-bubbling{\rm )} operations starting from $f$, we have a fold
 map $f^{\prime}:M^{\prime} \rightarrow N$ and for
  any group $G_{j^{\prime}}$ isomorphic
 to a non-trivial finite group, there exists a positive
  integer ${j^{\prime}}_a \leq i_0$ and $G_{j^{\prime}-{{j}^{\prime}}_a}$ {\rm (}resp. $G_{j^{\prime}+{{j}^{\prime}}_a}${\rm )} is an infinite group.

  Furthermore, assume that there exists an integer $d$ satisfying the following.
  \begin{enumerate}
  \item For an integer $d_1$, the group $G_{d_1}$ is non-trivial and finite. 
  \item For any non-negative integer $d_2$ smaller than $d$, the group $G_{d_1+d_2}$ is non-trivial and finite.
  \end{enumerate}
  Then $d$ is not larger than $i_0$. 
\end{Thm} 
\begin{proof}
For any group $G_{{j^{\prime}}}$ in the sequence $\{G_j\}$ isomorphic
 to a non-trivial finite group, by Proposition \ref{prop:3}, there exists a generating manifold $S_c$, an
   integer ${j^{\prime}}_b > 0$ and a non-trivial finite subgroup represented
   as $H_{j^{\prime}-{j^{\prime}}_b}(S_c;\mathbb{Z}) \otimes \mathbb{Z}$; note that
    the dimension of $S_c$ is $n-{j^{\prime}}_b$ and
     that $H_{j^{\prime}-{j^{\prime}}_b}(S_c;\mathbb{Z})$ is a non-trivial finite group.
    
 We prove the first statement in the case where $H_{j_2-i_j}(S_{j_1};\mathbb{Z})$ is an infinite
  group for the integers $j_1$, $j_2$ and $i_j$ (we call this the {\it minus} case). Set $j_1=c$
  and $j_2=j^{\prime}-{j^{\prime}}_b$ in $H_{j^{\prime}-{j^{\prime}}_b}(S_c;\mathbb{Z}) \otimes \mathbb{Z}$ in the previous paragraph. Then for a positive
  integer $i_j \leq i_0$, $H_{j_2-i_j}(S_{c};\mathbb{Z})$ is an infinite group. As a result, we have $G_{j_2-i_j+{j^{\prime}}_b} \cong G_{j^{\prime}-{j^{\prime}}_b-i_j+{j^{\prime}}_b} \cong G_{j^{\prime}-i_j}$ and the group is infinite and $H_{j_2-i_j}(S_c;\mathbb{Z}) \otimes \mathbb{Z}$ is regarded as its
   subgroup by Proposition \ref{prop:3} since $S_c$ is ($n-{j^{\prime}}_b$)-dimensional. We can
    take $j^{\prime}_a=i_j$ and this completes the proof.
  
   We can prove this similarly in the case where $H_{j_2+i_j}(S_{j_1};\mathbb{Z})$ is
    an infinite group (we call this the {\it plus} case) for the integers $j_1$, $j_2$ and $i_j$.
  
  From these statements, the last statement for the integer $d$ follows immediately.
\end{proof}

\begin{Ex}
\label{ex:4}
We consider the iteration of normal M-bubbling operations in the proof of Theorem \ref{thm:4}. This case is an example of the case $i_0=1$ and the minus case in the situation of Theorem \ref{thm:5}. We can also say that this case is an example of the case $i_0=2$ and the plus case. 
\end{Ex}
 
We introduce another family of maps obtained by the operations from a given map.

\begin{Thm}
\label{thm:6}
Let $f:M \rightarrow N$ be a fold
 map and let $n \geq 5$.
 
 For any integer $0 \leq j \leq n-1$, we define $G_j$ as a finite commutative group so
  that $G_0$, $G_1$, $G_{n-2}$ and $G_{n-1}$ are trivial and that for $n \geq 6$, $G_{n-3}$ is regarded as the direct sum of the direct sum $\oplus_{j=2}^{n-4} G_j$ and a commutative group $G$.
 We also define a non-negative integer $h_j$ for any integer $0 \leq j \leq n-1$ so that the following hold.
 \begin{enumerate}
 \item If for each integer $2 \leq j \leq n-4$, $G_j$ is not trivial, then $h_{j-1}$ is positive.
 \item if the direct sum $\oplus_{j=2}^{n-4} G_j$ is not ismorphic to $G_{n-3}$, or $n=5$ and $G_{n-3}$
  is not trivial, then $h_{n-4}$ is positive.
 \item $h_{n-1}$ is not smaller than the sum ${\sum}_{j=0}^{n-2} h_j$.
 \end{enumerate} 
 Under these assumptions, by a finite iteration
 of M-bubbling {\rm (}S-bubbling{\rm )} operations starting from $f$, we obtain a fold map $f^{\prime}$ such that for $j>0$, $H_j(W_{{f}^{\prime}};R)$ is isomorphic to the group $H_j(W_f;R) \oplus {\mathbb{Z}}^{h_{j-1}} \oplus G_{j-1}$. 
\end{Thm}

\begin{proof}
As mentioned, for any commutative group $G$, there exists a $3$-dimensional connected, closed and orientable manifold $S_G$ such that the groups $H_0(S_G;\mathbb{Z})$ and $H_3(S_G;\mathbb{Z})$ are isomorphic to $\mathbb{Z}$, $H_1(S_G;\mathbb{Z})$ is isomorphic to $G$ and $H_2(S_G;\mathbb{Z})$ is trivial and this manifold can be smoothly embedded into ${\mathbb{R}}^5$. Note that by fundamental technique of differentiable topology or the theory of higher-dimensional knots and links, for any positive integer $k$, we can construct a ($k+3$)-dimensional closed manifold $S_{G,k}$ by gluing $D^{k+1} \times S^2$ and the product of $S^k$ and a manifold obtained by removing the interior of a smoothly embedded $3$-dimensional closed disc from $S_G$ on the boundaries so that the following hold.
\begin{enumerate}
\item For $j=1$ and $j=k+1$, $H_j(S_{G,k};\mathbb{Z})$ is isomorphic to $G$ and for any
 other integer $0<j<k+3$, $H_j(S_{G,k};\mathbb{Z})$ is trivial.
\item We can smoothly embed $S_{G,k}$ into ${\mathbb{R}}^{k+5}$.
\end{enumerate}
We need the following family of generating manifolds to obtain the resulting map and source manifold by normal M-bubbling  {\rm (}S-bubbling{\rm )} operations.
\begin{enumerate}
\item If for any integer $1 \leq j \leq n-4$, then there exist just $h_j$ ($n-j-1$)-dimensional manifolds. If $h_j>0$ holds, then there exists $h_j-1$ manifolds whose homology groups are isomorphic to $S^{n-j-1}$ 
for $1 \leq j \leq n-4$. Moreover, for each integer $1 \leq j < n-4$, if $h_j>0$ holds, then
 just one manifold diffeomorphic to $S_{G_j,n-j-4}$ exists in the family and for $j=n-4$, if $h_j>0$ holds, then exists just one manifold diffeomorphic to $S_G$ in the family.
\item For $j=0,n-3,n-2$, there exist just $h_j$ ($n-j-1$)-dimensional manifolds and their homology groups with coefficients
 $\mathbb{Z}$ are isomorphic to that of $S^{n-j-1}$ as ($n-j-1$)-dimensional manifolds.
\item In the family, there exists just $h_{n-1}-{\sum}_{j=0}^{n-2} h_j$ points.
\end{enumerate}

\end{proof}
Note that we cannot take $i_0$ as $1$ in the situation of Theorem \ref{thm:5} in general in this theorem. Moreover, for example, in the case $n=6$ or equivalently, $n-4=2$ holds, we can obtain an example we cannot obtain in the case where $i_0$ is $1$.

From Proposition \ref{prop:5}, related to Theorem \ref{thm:4} or \ref{thm:6}, we have the following.

\begin{Cor}
\label{cor:4}
In the situation of Theorem \ref{thm:4} or \ref{thm:6}, let the map $f$ be almost-special generic and the inequality $m-n>n-2$ hold. If for the source manifold ${M}^{\prime}$ of the resulting fold map $f^{\prime}$ obtained by a finite iteration of S-bubbling operations, the 1st homology group $H_1({M}^{\prime};\mathbb{Z})$ is trivial and $H_{n-2}(W_{{f}^{\prime}};\mathbb{Z})$ is not free, then the source manifold does not admit an almost special generic map into ${\mathbb{R}}^n$.  
\end{Cor}
  
Now we extend normal bubbling operations to obtain other maps and Reeb spaces. 

\begin{Def}
\label{def:5}
In Definition \ref{def:3}, let $S$ be the bouquet of finite connected and orientable closed submanifolds whose dimensions are smaller than $n$ of
 $P$ and $N(S)$, ${N(S)}_i$ and ${N(S)}_o$ be small regular neighborhoods
 of $S$ in $P$ such that ${N(S)}_i \subset N(S) \subset {N(S)}_o$
 holds and that these three are isotopic as regular neighborhoods. By a similar way,
 we define a similar operation and call the operation a {\it bubbling operation} to $f$. We call $Q_0:={\bar{f}}^{-1}(S) \bigcap q_f(Q)$, which is homeomorphic
 to $S$, the {\it generating polyhedron} of the bubbling operation.
\end{Def} 

We can define an {\it M-bubbling operation} and an {\it S-bubbling operation} similarly. 
We can also define a {\it trivial} bubbling operation similarly as in Example \ref{ex:1} (\ref{ex:1.1}).
FIGURE \ref{fig:8} and FIGURE \ref{fig:9} represent simple examples.

\begin{figure}
\includegraphics[width=40mm]{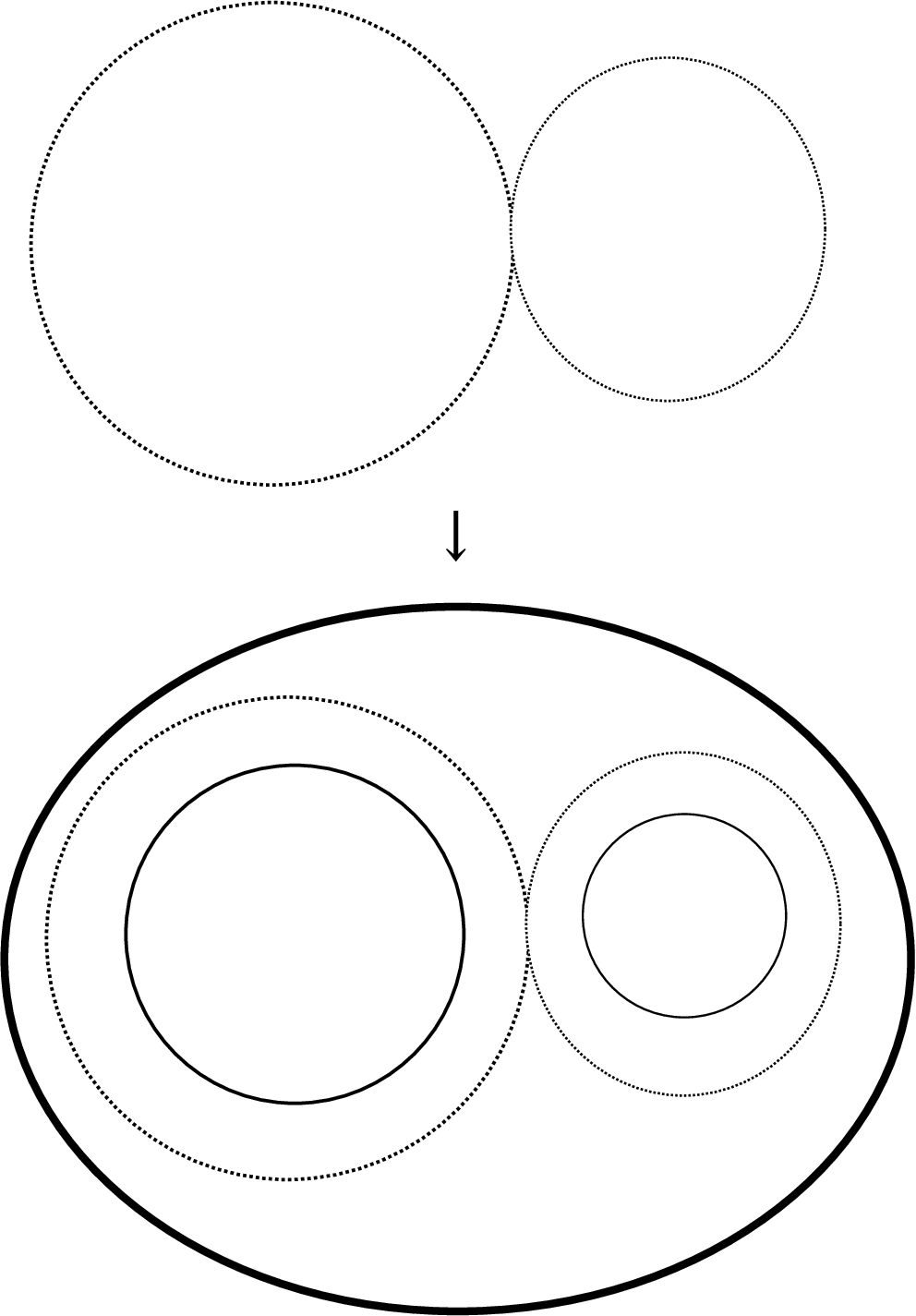}
\caption{The case where the generating polyhedron is a bouquet of two circles ($n=2$).}
\label{fig:8}
\end{figure}

\begin{figure}
\includegraphics[width=40mm]{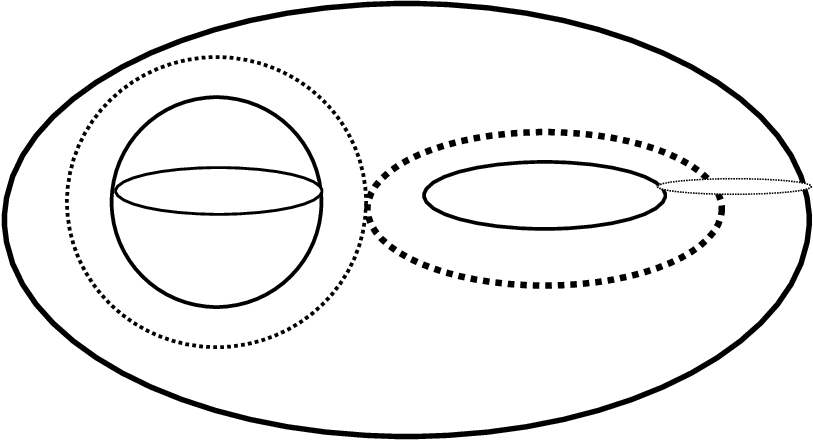}
\caption{The case where the generating polyhedron is a bouquet of a circle and a $2$-dimensional sphere ($n=3$).}
\label{fig:9}
\end{figure}

\begin{Prop}
\label{prop:6}
Let $f:M \rightarrow N$ be a fold map. Let $f^{\prime}:{M}^{\prime} \rightarrow N$ be a fold map obtained by an
 M-bubbling operation to $f$. Let $S$ be the generating polyhedron of the M-bubbling
 operation. Let $k$ be a positive integer and $S$ be represented as the bouquet of submanifolds $S_j$ where $j$ is an integer satisfying $1 \leq j \leq k$.  
Then, for any integer $0\leq i<n$, we have

$$H_{i}(W_{{f}^{\prime}};R) \cong H_{i}(W_f;R) \oplus {\oplus}_{j=1}^{k} (H_{i-(n-{\dim} S_j)}(S_j;R))$$

and we also have $H_{n}(W_{{f}^{\prime}};R) \cong H_{n}(W_f;R) \oplus R$.
\end{Prop}
\begin{proof}

Let $N(S)$ be a small regular neighborhood of $S$ in $W_f$ as in Definition \ref{def:4}. The
 space ${W_{{f}^{\prime}}}$ is obtained by attaching $q_{{f}^{\prime}}({q_f}^{-1}(N(S)))$ to $W_f$ on $N(S) \subset W_f$. We have the following
 exact sequence.

$\begin{CD}
@>   >> H_i(N(S);R) @>  >> H_i(W_f;R) \oplus H_i(q_{{f}^{\prime}}({q_f}^{-1}(N(S)));R)
\end{CD}$

$\begin{CD}
@>   >>  H_i(W_{f^{\prime}};R) @>   >> H_{i-1}(N(S);R) 
\end{CD}$

$\begin{CD}
@>   >> H_{i-1}(W_f;R) \oplus H_{i-1}(q_{{f}^{\prime}}({q_f}^{-1}(N(S)));R) @>   >>
\end{CD}$


$N(S)$ is
 regarded as
 a boundary connected sum of the manifolds $N(S_j)$ and each $N(S_j)$ is regarded as the
 total space of a bundle over $S_j$ whose fiber is an ($n-{\dim} S_j$)-dimensional standard closed disc. 

 $q_{{f}^{\prime}}({q_f}^{-1}(N(S)))$ is regarded as a connected sum of $n$-dimensional
 orientable closed and connected manifolds $Q_j$ satisfying the following two where $j$ is
 an integer satisfying $1 \leq j \leq k$.

\begin{itemize}
\item $Q_j$ is regarded as the total space of
 a linear $S^{n-{\dim} S_j}$-bundle over $S_j$.
\item The bundle $N(S_j)$ over $S_j$ is a subbundle of the bundle $Q_j$ whose fiber
 is $D^{n-{\dim} S_j} \subset S^{n-{\dim} S_j}$.
\end{itemize}

  $H_i(Q_j;R)$ is of the form ${\oplus}_{i^{\prime}=0}^{i} H_{i^{\prime}}(N(S_j);R) \otimes H_{i-i^{\prime}}(S^{n-{\dim} S_j};R)$. 
 The homomorphism from $H_i(N(S_j);R)$ into $H_i(Q_j;R)$ or ${\oplus}_{i^{\prime}=0}^{i} H_{i^{\prime}}(N(S_j);R) \otimes H_{i-i^{\prime}}(S^{n-{\dim} S_j};R)$ induced from the natural inclusion is injective and of the form $x \mapsto x \otimes 1 \in H_{i}(N(S_j);R) \otimes H_{0}(S^{n-{\dim} S_j};R)$. 

 Since $q_{{f}^{\prime}}({q_f}^{-1}(N(S)))$ is regarded as a connected sum of manifolds $Q_j$, for
 any integer $0<i<n$, $H_i(q_{{f}^{\prime}}({q_f}^{-1}(N(S)));R)$ is isomorphic to ${\oplus}_{j=1}^k H_i(Q_j;R)$
 and $H_0(q_{{f}^{\prime}}({q_f}^{-1}(N(S)));R) \cong H_n(q_{{f}^{\prime}}({q_f}^{-1}(N(S)));R) \cong R$ holds.

 By virtue of these observations, for any integer $0<i<n$, we have

$$H_{i}(W_{{f}^{\prime}};R) \cong H_{i}(W_f;R) \oplus
 {\oplus}_{j=1}^{k} (H_{i-(n-{\dim} S_j)}(S_j;R))$$. We also have $H_{0}(W_{{f}^{\prime}};R) \cong H_{0}(W_f;R)$ and $H_{n}(W_{{f}^{\prime}};R) \cong H_{n}(W_f;R) \oplus R$. Note that the first
  result holds for $i=0$ since $H_{i-(n-{\dim} S_j)}(S_j;R)$ is zero for $i=0$.
\end{proof}

\begin{Cor}
In the situation of Proposition \ref{prop:5}, if $H_{i-(n-{\dim} S_{j})}(S_{j};R)$ is free for
 any $1 \leq j \leq k$ and $H_i(W_f;R)$ is free, then $H_i(W_{{f}^{\prime}};R)$ is also free.
\end{Cor}

By applying M-bubbling operations, we have results similar to ones mentioned in Theorem \ref{thm:1} and Theorem \ref{thm:2} for example. However, some
 statements are a bit different. For example, we have the following.

\begin{Thm}
\label{thm:7}
Let $R$ be a PID. For any integer $0 \leq j \leq n$, we define $G_j$ as a free and finitely generated
 module over $R$ so that $G_0$ is trivial and that $G_n$ is not zero. Then, by
 a finite iteration of normal M-bubbling {\rm (}S-bubbling{\rm )} operations to a map $f$, we obtain a fold map $f^{\prime}$ such
  that $H_j(W_{{f}^{\prime}};R)$ is isomorphic to $H_j(W_f;R) \oplus G_j$. 
\end{Thm}
\begin{proof}
As done in Theorem \ref{thm:2}, we show the statement in the case where $f$ is the natural projection of the standard sphere and we can show this in general cases similarly.

Set $g_j={\rm rank} \quad G_j$.
We can choose $g_n \geq 1$ disjoint polyhedra in an open disc in an appropriate connected component of the regular value set as in Example \ref{ex:1} (\ref{ex:1.1}) so that the following hold.
\begin{enumerate}
\item Each polyhedron is a bouquet of spheres whose dimensions are not smaller than $1$ and not larger than $n-1$ or a point.
\item The number of $k$-dimensional spheres used in all of these bouquets is $g_{n-k}$.  
\end{enumerate}
We can apply (trivial) M-bubbling (S-bubbling) operations
 such that the generating
 polyhedra are inverse images of connected components in the previous disjoint
 polyhedra by the maps from the Reeb spaces to ${\mathbb{R}}^n$ as in Definition \ref{def:2} one after
 another starting from the given map $f$. By Proposition \ref{prop:6}, we have a desired map.

 Furthermore, if we choose $g_n \geq 1$ disjoint polyhedra in an open disc in an appropriate connected
  component of the regular value set as in Example \ref{ex:1} (\ref{ex:1.1}) such that the following two hold and that we can demonstrate a finite iteration of M-bubbling (resp. S-bubbling) operations similarly, then we also have a desired map.
\begin{enumerate}
\item Each polyhedron is a bouquet of manifolds whose dimensions are not smaller than $1$ and not larger than $n-1$ or a point.
\item The number of $k$-dimensional manifolds used in all of these bouquets is $g_{n-k}$ and their homology types with coefficient rings $R$ are same as that of a $k$-dimensional sphere.  
\end{enumerate}
\end{proof}

Compare this theorem to Theorem \ref{thm:2} (\ref{thm:2.1}) and (\ref{thm:2.2}) for example.

Last, we have the following, which explicitly implies that there exists a family of resulting maps obtained by finite iterations of bubbling operations to a given map such that the homology groups of the resulting Reeb spaces are mutually isomorphic.

\begin{Thm}
\label{thm:8}
Let $N={\mathbb{R}}^n$. 
Let $R$ be a PID. For any integer $0 \leq j \leq n$ and a finitely generated module $G_j$ over $R$ satisfying that $G_0$ is trivial and that $G_n$ is not zero. By
 a finite iteration of M-bubbling operations to a map $f:M \rightarrow {\mathbb{R}}^n$, let us obtain a fold map $f^{\prime}$ such
  that $H_j(W_{{f}^{\prime}};R)$ is isomorphic to $H_j(W_f;R) \oplus G_j$ and let us denote a polyhedron PL homeomorphic to the generating polyhedron of the $k$-th M-bubbling operation
 by $P_k$. Let $\{Q_k\}$ be a family of a finite number of bouquets of finite numbers of smooth, closed, connected and orientable manifolds whose dimensions are smaller than $n$ such that each $Q_k$ can be realized as a bouquet of smooth closed submanifolds in ${\mathbb{R}}^n$ and
 that $\{P_k\}$ is a subsequence of $\{Q_k\}$. Then there exists a finite iteration of bubbling operations to $f$ satisfying the following and we obtain a fold map such that the homology group of the Reeb space is isomorphic to that of $W_{{f}^{\prime}}$ in the assumption. 
\begin{enumerate}
\item For the $k$-the operation, the generating polyhedron is PL homeomorphic to $Q_k$.
\item For the step corresponding to $P_k$ and the corresponding generating polyhedron is $P_k$, the corresponding bubbling operation is an M-bubbling operation.
\item For steps except the steps just before, the corresponding bubbling operations are not M-bubbling operations.
\end{enumerate}
Furthermore, at each step, we can take the connected component of $f(M)-f(S(f))$ including the corresponding generating polyhedron and the corresponding connected component of the inverse image
 arbitrary. Furthermore, at the step corresponding to $P_k$, the resulting two connected components of the corresponding inverse image of a corresponding regular value can be any pair such that the connected sum is the original
 connected component and at the step not corresponding to any polyhedron in $\{P_k\}$, the resulting connected component of the corresponding inverse image of a corresponding regular value can be any manifold obtained by attaching a handle whose index is smaller than $m-n$ to the original connected component.
\end{Thm}

We only explain essential discussions in the proof and it is not difficult to understand the proof.

\begin{proof}[Important points of the proof]
We are enough to take each generating polyhedron in an open ball in an arbitrary connected component of the regular value set and at each step, perform a trivial bubbling operation after choosing an arbitrary connected component of the inverse image of the open ball: note that in this discussion, the assumptions $N={\mathbb{R}}^n$ etc. are essential. The results on resulting inverse images of corresponding regular values are based on the definition of (M-)bubbling operation.  
\end{proof}

\end{document}